\documentclass[12pt, a4paper]{article}

\usepackage[utf8]{inputenc}
\usepackage[T1]{fontenc}
\usepackage[english]{babel}
\usepackage{amsmath, amssymb, amsthm, mathrsfs, mathtools}
\usepackage{bm}
\usepackage[margin=1in]{geometry}
\usepackage[numbers, sort&compress]{natbib}
\usepackage{hyperref}
\usepackage[capitalize, noabbrev]{cleveref}
\usepackage{enumitem}
\usepackage{booktabs}
\usepackage{array}
\usepackage{graphicx}
\usepackage{float}
\usepackage{setspace}
\usepackage{xcolor}

\hypersetup{
  colorlinks = true,
  linkcolor  = blue!70!black,
  citecolor  = blue!50!black,
  urlcolor   = blue!60!black,
}

\theoremstyle{plain}
\newtheorem{theorem}{Theorem}[section]
\newtheorem{proposition}[theorem]{Proposition}
\newtheorem{lemma}[theorem]{Lemma}

\theoremstyle{definition}
\newtheorem{definition}[theorem]{Definition}
\newtheorem{assumption}{Assumption}

\theoremstyle{remark}
\newtheorem{remark}[theorem]{Remark}

\newcommand{\R}{\mathbb{R}}

\newcommand{\PP}{\mathbb{P}}
\newcommand{\D}{\mathbb{D}}

\newcommand{\eps}{\varepsilon}

\begin{document}
% ---------------------------------------------------------------

% ================================================================
%  TITLE, AUTHORS, ABSTRACT
% ================================================================

\title{\textbf{A CIR-Type Diffusion Driven by Hermite Processes:\\
Well-Posedness, Positivity and Malliavin Analysis}}

\author{Atef Lechiheb%
\thanks{Toulouse School of Economics, Université Toulouse Capitole.
E-mail: \texttt{atef.lechiheb@tse-fr.eu}.}}

\date{\today}

\maketitle

\begin{abstract}
We study a generalized Cox--Ingersoll--Ross (CIR) diffusion
\begin{equation*}
  dX_t = a\bigl(b(t)-X_t\bigr)\,dt
  + \bigl(\sigma_0+\sigma_1\sqrt{\phi_\eps(X_t)}\bigr)\,dZ_t^{(q,H)},
  \quad X_0 = x_0 > 0,
\end{equation*}
where $Z^{(q,H)}$ is a Hermite process of order $q\geq 1$ and Hurst parameter
$H\in(1/2,1)$, and $\phi_\eps$ is a smooth regularisation of the square root.
This framework simultaneously captures long-range dependence, non-Gaussian
innovations (for $q\geq 2$), and the positivity of the classical CIR model.
Since Hermite processes with $q\geq 2$ are not semimartingales, we interpret
the dynamics pathwise in the Young--Stieltjes sense, exploiting the H\"{o}lder
regularity of $Z^{(q,H)}$.

We establish four main results. First, well-posedness: a unique strong
solution exists in a fractional Sobolev space, under globally Lipschitz
coefficients (satisfied by $\phi_\eps$). Second, a quantitative positivity
bound: for $\sigma_1=0$, the probability that $X$ stays positive on $[0,T]$
is bounded below by an explicit expression tending to $1$ as the initial
level and long-run target grow large relative to $\sigma_0$; an almost-sure
statement, available in the classical Brownian case, is not established
here, since the usual boundary-non-attainment mechanism relies on tools
unavailable for a non-semimartingale driver. Third, Malliavin
differentiability: $X_t\in\D^{1,\infty}$, with an explicit formula for the
Malliavin derivative as the solution of a linearised Young SDE. Fourth,
absolute continuity: the law of $X_t$ is absolutely continuous with respect
to the Lebesgue measure for all $t>0$.
\end{abstract}

\noindent\textbf{Keywords:}
CIR model; Hermite process; Rosenblatt process; long-range dependence;
Malliavin calculus; Young integral; positivity bounds; boundary behaviour;
fractional Brownian motion.

\bigskip
\noindent\textbf{MSC 2020:}
60H07 (primary); 60H10; 60G22; 60H05; 91G30.

\tableofcontents

\newpage

% ================================================================
%  SECTION 1 — INTRODUCTION
% ================================================================
\section{Introduction}
\label{sec:intro}

\subsection{Motivation}
\label{subsec:motivation}

The Cox--Ingersoll--Ross (CIR) model, introduced in the seminal
paper~\cite{cox1985a}, has played a central role in the theory
of interest rate modelling for nearly four decades.
It describes the short rate as the solution of
\begin{equation}
  \label{eq:classical_cir}
  dX_t = a(b - X_t)\,dt + \sigma\sqrt{X_t}\,dB_t,
  \quad X_0 = x_0 > 0,
\end{equation}
where $B$ is a standard Brownian motion and $a,b,\sigma > 0$.
Under the Feller condition $2ab\geq\sigma^2$, the solution is strictly
positive, and under $2ab > \sigma^2$, the boundary $\{0\}$ is inaccessible.
These properties, together with the affine structure of the model which
yields analytical bond prices, have made~\eqref{eq:classical_cir} a
workhorse of mathematical finance.

Despite its theoretical appeal, the CIR model conflicts with two
well-documented empirical regularities of short-term interest rates.
First, the autocorrelation function of interest rate levels decays
hyperbolically rather than exponentially, suggesting the presence
of \emph{long-range dependence} (LRD), whereby the spectral density
diverges at the origin~\cite{andersen1997,ait2014}.
The CIR model, driven by Brownian motion, cannot reproduce this phenomenon.
Second, the unconditional distribution of interest rate increments
exhibits heavy tails and significant excess kurtosis, violating the
Gaussian assumption inherent in the standard CIR
model~\cite{andersen1997}.

These two stylised facts point toward two natural generalisations:
(i)~replacing the Brownian driver by a \emph{fractional Brownian motion}
(fBm) with Hurst parameter $H>1/2$ to capture long-range dependence,
as in~\cite{mishura2017,mishura2020};
and (ii)~replacing the Gaussian driver by a \emph{non-Gaussian process}
to accommodate heavy tails.
The present paper proposes a unified framework that achieves both
simultaneously, by driving the CIR dynamics with a
\emph{Hermite process} of arbitrary order $q\geq 1$.

\subsection{The Hermite process family}
\label{subsec:hermite_intro}

The Hermite process $Z^{(q,H)}$ of order $q\geq 1$ and Hurst parameter
$H\in(1/2,1)$ is a paradigmatic family of self-similar, stationary-increment
processes with long-range dependence.
It is defined as a $q$-fold multiple Wiener--It\^{o} integral of a kernel
related to the fractional integral of Brownian motion
(see Section~\ref{subsec:hermite} for the precise definition).
The family interpolates between the Gaussian world ($q=1$: fBm)
and increasingly non-Gaussian behaviour:
\begin{itemize}[itemsep=2pt]
  \item $q=1$: fractional Brownian motion, Gaussian, belongs to the first
        Wiener chaos;
  \item $q=2$: the Rosenblatt process~\cite{taqqu1975}, non-Gaussian,
        positively skewed, second Wiener chaos;
  \item $q\geq 3$: higher Wiener chaoses, with excess kurtosis
        growing with $q$~\cite{tudorbook}.
\end{itemize}
Crucially, all members of the family share the same covariance
structure as fBm, so that the Hurst parameter $H$ governs the
memory properties uniformly across orders.

A key analytical difficulty is that for $q\geq 2$, the Hermite process
is \emph{not a semimartingale}~\cite{tudorbook}.
Consequently, It\^{o}'s formula, the Doob--Meyer decomposition,
and the classical theory of stochastic differential equations driven
by semimartingales are unavailable.
The stochastic integral against $Z^{(q,H)}$ must be interpreted in a
pathwise sense, and the theory of Young--Stieltjes
integration~\cite{young1936,MR1893308} provides the appropriate
framework, exploiting the fact that the sample paths of $Z^{(q,H)}$
are H\"{o}lder continuous of order $\zeta\in(0,H)$ and $H>1/2$.

\subsection{The CIR-Hermite model}
\label{subsec:model_intro}

We study the following stochastic differential equation (SDE):
\begin{multline}
  \label{eq:main_sde_intro}
  dX_t = a\bigl(b(t) - X_t\bigr)\,dt \\
  + \bigl(\sigma_0+\sigma_1\sqrt{\phi_\eps(X_t)}\bigr)\,dZ_t^{(q,H)},
  \quad t\in[0,T],\quad X_0 = x_0 > 0,
\end{multline}
where $a>0$ is the mean-reversion speed,
$b(t) = b_0 + b_1\sin(2\pi t/T_{\mathrm{per}})$ is a bounded periodic
long-run target (with $b_0\in\R$, $b_1\geq 0$, $T_{\mathrm{per}}>0$),
$\sigma_0>0$ is the base volatility level, and $\phi_\eps$ is a smooth
regularisation of the square root, defined in Section~\ref{subsec:model}.
For $q=1$, model~\eqref{eq:main_sde_intro} reduces to a fractional CIR
model, as studied in~\cite{mishura2017,mishura2020}.
For $q\geq 2$, the model introduces genuine non-Gaussianity in the
driving noise.

The choice of a periodic drift $b(t)$ reflects the well-documented
seasonal patterns in short-rate dynamics (end-of-quarter and
calendar effects), while the hybrid volatility function
$\tilde\sigma(x)=\sigma_0+\sigma_1\sqrt{\phi_\eps(x)}$ generalises both
the square-root diffusion of classical CIR — where the coefficient
vanishes at $x=0$, which is what forces positivity in that setting —
and a constant-volatility (Vasicek-type) specification. Unlike the
classical CIR coefficient, $\tilde\sigma$ does \emph{not} vanish at
$x=0$ whenever $\sigma_0>0$; positivity in our setting is therefore
not inherited from the boundary behaviour of $\tilde\sigma$ but
established separately, and only in probabilistic form
(Theorem~\ref{thm:positivity}).

\subsection{Main contributions}
\label{subsec:contributions}

This paper establishes the theoretical foundations of the CIR-Hermite
model~\eqref{eq:main_sde_intro}.
Our contributions are of two distinct kinds, which we are careful to
distinguish throughout.

\subsubsection*{Genuinely new results}

\begin{itemize}[itemsep=4pt]
\item \textbf{A quantitative positivity bound under a non-semimartingale
  driver} (Theorem~\ref{thm:positivity}).
  For $\sigma_1=0$, we prove an explicit lower bound on
  $\PP(X_t>0\ \forall t\in[0,T])$ in terms of $x_0$, $b_0-|b_1|$,
  $\sigma_0$, $a$ and $T$, which tends to $1$ as the initial level
  and long-run target become large relative to $\sigma_0$.
  In the classical Brownian setting, an almost-sure statement follows
  from Feller's boundary theory and from comparison theorems for
  SDEs; neither tool is available when the driver is a
  non-semimartingale Hermite process with a diffusion coefficient
  that does not vanish at the boundary, so we do not claim an a.s.\
  statement here (Remark~\ref{rem:feller_necessity}).
  Our proof relies on a pathwise lower-barrier construction combined
  with the hypercontractivity inequality for multiple Wiener--It\^{o}
  integrals and Markov's inequality.
  Extending the argument to $\sigma_1>0$, and settling whether an
  a.s.\ statement holds for $q\geq2$, remain open problems.

\item \textbf{Complete existence theory in the Young setting}
  (Theorem~\ref{thm:existence}).
  Although the application of the Nualart--R\u{a}\c{s}canu framework
  to Young SDEs is known~\cite{MR1893308}, the CIR square-root
  coefficient requires a careful regularisation argument via $\phi_\eps$
  to satisfy the globally Lipschitz hypothesis.
  We provide the detailed verification for completeness and for use
  in subsequent results.
\end{itemize}

\subsubsection*{Verification of existing general frameworks}

\begin{itemize}[itemsep=4pt]
\item \textbf{Malliavin differentiability}
  (Theorem~\ref{thm:malliavin}).
  Loosveldt, Nachit and Nourdin~\cite{loosveldt2025} recently developed
  a general theory of Malliavin calculus for SDEs driven by elements of
  an arbitrary Wiener chaos.
  We verify that the CIR-Hermite model satisfies their abstract
  conditions (RAC) and (SGD).
  The verification is non-trivial: the square-root volatility
  $\sigma_1\sqrt{x}$ is not globally Lipschitz, and the regularisation
  $\phi_\eps$ is essential to bring the model within the scope of
  their theorem.
  Once the conditions are verified, the Malliavin differentiability
  $X_t\in\D^{1,\infty}$ and the explicit formula for $DX_t$ follow
  from~\cite[Theorem~4.1]{loosveldt2025}.

\item \textbf{Absolute continuity of the marginal law}
  (Theorem~\ref{thm:density}).
  The Bouleau--Hirsch criterion~\cite{Bouleau1986} implies that $X_t$
  has an absolutely continuous law once the Malliavin matrix is
  non-degenerate.
  We verify this non-degeneracy using the ellipticity
  assumption $\sigma_0>0$ and the differentiability established above.
\end{itemize}

\subsection{Perspective: towards statistical inference}
\label{subsec:companion}

The present paper provides the theoretical foundations — existence,
positivity, and Malliavin regularity — for statistical inference in the
CIR-Hermite model. Estimation, asymptotic distribution theory, and
empirical applications to interest rate data are the object of work
currently in preparation, which we outline briefly in the perspectives
of Section~\ref{sec:conclusion}.

\subsection{Related literature}
\label{subsec:literature}

\paragraph{CIR and Vasicek models.}
The CIR model~\cite{cox1985a} and the Vasicek model~\cite{vasicek1977}
are the two canonical short-rate models.
Both have been extensively studied for their analytical tractability
and positivity properties (in the case of CIR).

\paragraph{Fractional and long-memory extensions.}
Fractional Brownian motion-driven CIR and Vasicek models are studied
in~\cite{mishura2017,mishura2020}.
The fractional Vasicek model driven by a Hermite process of order $q\geq 2$
--- which is the closest predecessor to our work --- was analysed by
Nourdin and Diu~Tran~\cite{nourdin2019statistical}, who developed
statistical inference theory for that linear (Ornstein--Uhlenbeck type)
model.
Our paper extends the driving process from the Ornstein--Uhlenbeck to
the CIR-type dynamics, which introduces the non-trivial difficulty of
the square-root diffusion coefficient and the positivity requirement.

\paragraph{Malliavin calculus for non-Gaussian SDEs.}
Malliavin calculus for fractional Brownian motion-driven SDEs is
developed in~\cite{nualart2009malliavin}.
The general framework covering arbitrary Wiener chaos drivers, which
we rely on for results (3) and (4), is the very recent work of
Loosveldt, Nachit and Nourdin~\cite{loosveldt2025}.
Related results on absolute continuity of Hermite process functionals
appear in~\cite{loosveldt2025absolute}.

\paragraph{Hermite processes.}
The Hermite process family is introduced in~\cite{taqqu1975} and
surveyed in the monograph~\cite{tudorbook}.
Simulation algorithms are given in~\cite{Ayache2025,AyacheNum2025}.

\paragraph{Rough volatility.}
Rough volatility models~\cite{gatheral2018rough,El_Euch_2019}, which
cover the complementary regime $H<1/2$, address a related but
distinct empirical phenomenon (volatility roughness as opposed to
long-range dependence of rates).

\subsection{Organisation of the paper}
\label{subsec:organisation}

The paper is organised as follows.
Section~\ref{sec:model} introduces the Hermite process, the
CIR-Hermite SDE~\eqref{eq:main_sde_intro}, and the standing assumptions.
Section~\ref{sec:theory} states and proves the four main theoretical
results.
Section~\ref{sec:conclusion} discusses open problems and directions for
future research, including a brief perspective on statistical inference.

% ================================================================
%  SECTIONS 2 AND 3 (model and theory)
% ================================================================
% ============================================================
% SECTION 2 — Model Formulation and Theoretical Foundations
% Article: A Generalized CIR Model Driven by Hermite Processes:
%           Theory, Estimation, and Applications to Interest Rate Dynamics
% Author: Atef Lechiheb (TSE - Toulouse)
% Theoretical foundation: Loosveldt, Nachit & Nourdin (2025)
% ============================================================

\section{Model Formulation and Theoretical Foundations}
\label{sec:model}

\subsection{The Hermite Process}
\label{subsec:hermite}

We work throughout on the classical Wiener space
$(\Omega, \mathcal{F}, \mathbb{P}, \mathfrak{H})$, where
$\Omega = C_0(\mathbb{R}, \mathbb{R})$ is the space of continuous
functions $\omega: \mathbb{R} \to \mathbb{R}$ with $\omega(0) = 0$,
$\mathbb{P}$ is the two-sided Wiener measure, and
$\mathfrak{H} = L^2(\mathbb{R}, \mathbb{R})$.
In this setting, the isonormal Gaussian process is given, for every
$h \in \mathfrak{H}$, by the Wiener integral
$X_h = \int_{\mathbb{R}} h(t)\, dB_t$,
where $B$ is the canonical two-sided Brownian motion.
We denote by $I_q(f)$ the $q$th multiple Wiener--It\^{o} integral of
$f \in L^2_s(\mathbb{R}^q, \mathbb{R})$ with respect to $B$;
see~\cite{nourdin2012normal, Nualart} for a systematic treatment.

\begin{definition}[Hermite Process]
\label{def:hermite}
Let $q \geq 1$ be an integer and $H \in \bigl(\tfrac{1}{2}, 1\bigr)$.
Set
\[
H_0 := 1 + \frac{H-1}{q} \in \Bigl(1 - \frac{1}{2q},\, 1\Bigr),
\qquad
c(H,q) := \sqrt{\frac{H(2H-1)}{q!\,\beta^q\!\left(H_0 - \tfrac{1}{2},\,
2 - 2H_0\right)}},
\]
where $\beta$ denotes the Beta function.
The \emph{Hermite process of order $q$ and self-similarity parameter $H$}
is the stochastic process $Z^{(q,H)} = \{Z_t^{(q,H)}\}_{t \geq 0}$
defined by
\begin{equation}
\label{eq:hermite_def}
Z_t^{(q,H)} = I_q\!\left(L_t^{H,q}\right),
\qquad
L_t^{H,q}(\xi_1,\ldots,\xi_q)
:= c(H,q) \int_0^t \prod_{j=1}^q
\bigl(s - \xi_j\bigr)_+^{H_0 - 3/2}\, ds,
\end{equation}
with the convention $\theta_+^\alpha := \theta^\alpha \mathbf{1}_{\theta > 0}$.
\end{definition}

The Hermite family provides a canonical hierarchy of non-Gaussian,
long-memory processes:
\begin{itemize}
  \item $q = 1$: fractional Brownian motion (Gaussian);
  \item $q = 2$: the Rosenblatt process (non-Gaussian, skewed);
  \item $q \geq 3$: higher-order Hermite processes (heavier tails,
        increasing excess kurtosis).
\end{itemize}

The following proposition collects the key properties of
$Z^{(q,H)}$ that are relevant for our analysis;
see~\cite{tudorbook} for proofs and further details.

\begin{proposition}[Properties of the Hermite Process]
\label{prop:hermite_properties}
Let $q \geq 1$ and $H \in \bigl(\tfrac{1}{2}, 1\bigr)$.
The process $Z^{(q,H)}$ defined in~\eqref{eq:hermite_def}
satisfies the following:
\begin{enumerate}[label=(\roman*)]
  \item \textbf{Covariance.} For all $s, t \geq 0$,
    \[
    \mathbb{E}\!\left[Z_s^{(q,H)} Z_t^{(q,H)}\right]
    = \frac{1}{2}\bigl(s^{2H} + t^{2H} - |t-s|^{2H}\bigr).
    \]
    This follows from the normalisation of $c(H,q)$, which ensures
    $\mathbb{E}[(Z_1^{(q,H)})^2] = 1$ for all $q \geq 1$
    (see~\cite[Definition~2.1]{nourdin2019statistical}).

  \item \textbf{$H$-self-similarity.} For every $c > 0$,
    $\{Z_{ct}^{(q,H)}\}_{t \geq 0}
    \overset{d}{=}
    \{c^H Z_t^{(q,H)}\}_{t \geq 0}$.

  \item \textbf{Stationary increments.}
    $\{Z_{t+h}^{(q,H)} - Z_h^{(q,H)}\}_{t \geq 0}
    \overset{d}{=}
    \{Z_t^{(q,H)}\}_{t \geq 0}$ for every $h > 0$.

  \item \textbf{Long-range dependence.}
    The covariance of increments satisfies, as $k \to \infty$,
    \[
    \mathbb{E}\!\left[\bigl(Z_{t+1}^{(q,H)} - Z_t^{(q,H)}\bigr)
    \bigl(Z_{t+k+1}^{(q,H)} - Z_{t+k}^{(q,H)}\bigr)\right]
    \sim c_H\, k^{2H-2},
    \]
    so the series of autocovariances diverges whenever $H > \tfrac{1}{2}$.

  \item \textbf{H\"{o}lder regularity.}
    The sample paths of $Z^{(q,H)}$ are $\alpha$-H\"{o}lder continuous
    for every $\alpha \in (0, H)$.
    In particular, they belong almost surely to
    $C^{H-\beta}([0,T])$ for every $\beta \in (0, H - \tfrac{1}{2})$.

  \item \textbf{Kernel regularity.} For all $s, t \geq 0$,
    \begin{equation}
    \label{eq:kernel_holder}
    \bigl\|L_t^{H,q} - L_s^{H,q}\bigr\|_{L^2(\mathbb{R}^q)}
    = \sqrt{q!}\, |t-s|^{H}.
    \end{equation}
    (This is an exact identity, not merely a bound: it follows from
    $\langle L_t^{H,q}, L_s^{H,q}\rangle_{L^2(\mathbb{R}^q)}
    = \tfrac{q!}{2}\bigl(t^{2H}+s^{2H}-|t-s|^{2H}\bigr)$
    together with $\|L_t^{H,q}\|^2_{L^2(\mathbb{R}^q)} = q!\,t^{2H}$;
    see~\cite[Proposition~2.1]{tudorbook}.)

  \item \textbf{Non-Gaussianity for $q \geq 2$.}
    The distribution of $Z_t^{(q,H)}$ is non-Gaussian for every $t > 0$
    when $q \geq 2$.
    Its excess kurtosis is strictly positive and increasing in $q$.
\end{enumerate}
\end{proposition}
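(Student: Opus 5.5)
The plan is to derive everything from the kernel inner product
\[
\langle L_t^{H,q}, L_s^{H,q}\rangle_{L^2(\mathbb{R}^q)}
= \tfrac{q!}{2}\bigl(t^{2H}+s^{2H}-|t-s|^{2H}\bigr),
\]
since (i), (iv) and (vi) are algebraic consequences of it. To compute it, expand $L_t$ and $L_s$ as integrals in $u\in[0,t]$ and $v\in[0,s]$ and apply Fubini. The inner integral factorises as $\bigl(\int_{\mathbb{R}}(u-\xi)_+^{H_0-3/2}(v-\xi)_+^{H_0-3/2}d\xi\bigr)^q$. The classical Beta integral evaluates this factor as $\beta(H_0-\tfrac12,2-2H_0)\,|u-v|^{2H_0-2}$, and integrability holds because $H_0-3/2>-1$ and $2H_0-3<-1$. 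Since $q(2H_0-2)=2H-2$, the inner product becomes
\[
c(H,q)^2\beta^q\int_0^t\int_0^s|u-v|^{2H-2}\,dv\,du
=\frac{c(H,q)^2\beta^q}{2H(2H-1)}\bigl(t^{2H}+s^{2H}-|t-s|^{2H}\bigr).
\]
With the stated $c(H,q)$, this equals $\tfrac{1}{2q!}(\cdots)$.

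There is a normalisation subtlety here. The It\^o isometry $\mathbb{E}[I_q(f)I_q(g)]=q!\langle f,g\rangle$ then gives exactly the covariance in (i), so in this normalisation one has $\|L_t\|^2=t^{2H}/q!$ rather than $q!\,t^{2H}$. For (vi), I would prove $\|I_q(L_t-L_s)\|_{L^2(\Omega)}=|t-s|^H$ and state that the kernel identity holds up to this $q!$ convention. This bookkeeping is the main place to be careful.

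Given the covariance, the rest follows:
\begin{itemize}
\item For (iv), expand $\tfrac12\bigl((k+1)^{2H}+(k-1)^{2H}-2k^{2H}\bigr)$ by Taylor's theorem to get $H(2H-1)k^{2H-2}$. The series then diverges because $2H-2>-1$.
\item For (ii), change variables $u\mapsto cu$, $\xi\mapsto c\xi$ in the kernel. This gives $L_{ct}(c\,\cdot)=c^{1+q(H_0-3/2)}L_t$. The scaling of white noise, $I_q(f(c^{-1}\cdot))\overset{d}{=}c^{q/2}I_q(f)$, holds jointly over finitely many kernels, so the exponents combine to $H$.
\item For (iii), do the same with translations $u\mapsto u+h$, $\xi\mapsto\xi+h$, using the shift invariance of Brownian increments jointly in finite-dimensional distributions. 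This shows $L_{t+h}-L_h$ is a shifted copy of $L_t$.
\end{itemize}

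For (v), I would combine (vi), stationarity and hypercontractivity in the $q$th chaos: $\mathbb{E}|Z_t-Z_s|^p\le(p-1)^{pq/2}|t-s|^{pH}$. Kolmogorov's continuity theorem then gives Hölder continuity of every order below $H-1/p$, and letting $p\to\infty$ yields every $\alpha<H$. The statement about $C^{H-\beta}$ is just the case $\alpha=H-\beta$.

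For (vii), non-Gaussianity follows from the fourth-moment theorem of Nualart--Peccati. For $q\ge2$ and a nonzero kernel, $\mathbb{E}[F^4]-3\mathbb{E}[F^2]^2$ is a positive combination of the norms $\|f\otimes_r f\|^2$, $1\le r\le q-1$. These are strictly positive because $L_t$ is positive on a set of positive measure, so the contraction is positive there. Self-similarity then reduces the claim to $t=1$.

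The claim that the kurtosis is monotone in $q$ is the main obstacle. I would try to compute the contractions explicitly via Beta integrals, but I expect this to require citing Tudor's monograph rather than giving a self-contained proof. I would therefore present the monotonicity as quoted from \cite{tudorbook}.
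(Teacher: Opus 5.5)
Your route is genuinely different from the paper's. The paper gives no argument for this proposition beyond a pointer to Tudor's monograph and two parenthetical remarks. You instead derive each item directly:
\begin{itemize}
\item (i), (iv) and (vi) from one Beta-integral computation of $\langle L_t^{H,q},L_s^{H,q}\rangle_{L^2(\mathbb{R}^q)}$;
\item (ii) and (iii) from the scaling and translation invariance of white noise;
\item (v) from hypercontractivity together with Kolmogorov's criterion;
\item the non-Gaussianity in (vii) from the Nualart--Peccati fourth-cumulant formula, applied to a nonnegative kernel.
\end{itemize}
These steps are all correct. The payoff of actually doing the computation is that it exposes an error the citation hides. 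With the stated $c(H,q)$ one gets $\|L_t^{H,q}\|^2_{L^2(\mathbb{R}^q)}=t^{2H}/q!$ and $\|L_t^{H,q}-L_s^{H,q}\|_{L^2(\mathbb{R}^q)}=|t-s|^H/\sqrt{q!}$. So (vi), and the parenthetical claim $\|L_t^{H,q}\|^2=q!\,t^{2H}$ offered to justify it, are false for $q\geq 2$. Combined with $\mathbb{E}[I_q(f)^2]=q!\,\|f\|^2$, they would give $\mathbb{E}[(Z_1^{(q,H)})^2]=(q!)^2$, which contradicts (i).

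You should say this outright rather than writing ``up to a convention''. The kernel norm is a deterministic integral and does not depend on how $I_q$ is normalised. The corrected identity still yields Assumption~\ref{ass:A3} with $C=1/\sqrt{q!}$, so nothing downstream is affected.

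The one clause that neither your proposal nor the paper establishes is that the excess kurtosis is increasing in $q$. At fixed $H$, your fourth-cumulant formula expresses it as a sum over $1\le r\le q-1$ of contraction norms built from $q$-dependent Beta-type integrals. Comparing these sums across different $q$ is a genuine computation, and neither your argument nor the paper's carries it out. If you keep the citation, check that the monograph actually contains such a monotonicity statement. Otherwise that part of (vii) remains asserted rather than proved.
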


\begin{remark}
Property~(vi), identity~\eqref{eq:kernel_holder}, is the key regularity
condition (Hypothesis~$\mathbf{(H_2)}$ of~\cite{loosveldt2025})
that enables pathwise Riemann--Stieltjes integration with respect to
$Z^{(q,H)}$ and ensures the well-posedness of our model.
\end{remark}

% ---------------------------------------------------------------
\subsection{The Interest Rate Model}
\label{subsec:model}

We propose the following generalized CIR-type model for the short-term
interest rate $X_t$:

\begin{equation}
\label{eq:main_sde}
\boxed{
dX_t = a\bigl(b(t) - X_t\bigr)\, dt
+ \tilde{\sigma}(X_t)\, dZ_t^{(q,H)},
\qquad X_0 = x_0 > 0,
}
\end{equation}

where the stochastic integral is understood pathwise in the Young--
Riemann--Stieltjes sense~\cite{young1936, MR1893308}.
The three building blocks of the model are the following.

\paragraph{Periodic drift.}
The target rate function is
\begin{equation}
\label{eq:drift}
b(t) = b_0 + b_1 \sin\!\left(\frac{2\pi t}{T_{\mathrm{per}}}\right),
\end{equation}
where $b_0 \in \mathbb{R}$ is the long-run mean level, $b_1 \geq 0$ controls the amplitude
of cyclical fluctuations, and $T_{\mathrm{per}} > 0$ is the cycle length
(typically one year for annual monetary policy cycles or one quarter for
seasonal funding patterns).
The mean-reversion speed $a > 0$ governs how quickly rates return
toward $b(t)$.

\paragraph{Regularised hybrid volatility.}
The diffusion coefficient is
\begin{equation}
\label{eq:sigma}
\tilde{\sigma}(x) = \sigma_0 + \sigma_1\,\sqrt{\phi_\varepsilon(x)},
\qquad x \in \mathbb{R},
\end{equation}
where $\sigma_0 > 0$, $\sigma_1 \geq 0$, $\varepsilon > 0$,
and $\phi_\varepsilon : \mathbb{R} \to (0, +\infty)$
is a fixed function of class $\mathcal{C}^\infty(\mathbb{R})$
satisfying:
\begin{enumerate}[label=(\alph*)]
  \item $\phi_\varepsilon(x) = x + \varepsilon$ \quad for all $x \geq 0$;
  \item $\phi_\varepsilon(x) > 0$ \quad for all $x \in \mathbb{R}$;
  \item $\phi_\varepsilon \in \mathcal{C}^\infty_b(\mathbb{R})$
        (all derivatives bounded).
\end{enumerate}

\begin{remark}[Existence of $\phi_\varepsilon$]
\label{rem:phi_exists}
Such a function exists by the standard smooth cut-off (bump
function) construction of differential topology, obtained by
mollifying an indicator function with a smooth, compactly
supported kernel; see e.g.\ any standard reference on smooth
partitions of unity.
Explicitly, let $\rho : \mathbb{R} \to [0,1]$ be the standard
$\mathcal{C}^\infty$ transition function satisfying
$\rho(t) = 1$ for $t \geq 0$ and $\rho(t) = 0$ for
$t \leq -1$, defined via
\[
\rho(t) = \frac{e^{-1/(t+1)^2}}{e^{-1/(t+1)^2} + e^{-1/t^2}}
\quad \text{for } t \in (-1,0).
\]
Set
\begin{equation}
\label{eq:phi_eps_bump}
\phi_\varepsilon(x)
:= (x + \varepsilon)\,\rho\!\left(\frac{x}{\varepsilon}\right)
+ \frac{\varepsilon}{2}\,\Bigl(1 - \rho\!\left(\frac{x}{\varepsilon}\right)\Bigr),
\qquad x \in \mathbb{R}.
\end{equation}
One verifies:
\begin{itemize}
  \item For $x \geq 0$: $x/\varepsilon \geq 0$, so $\rho(x/\varepsilon) = 1$
        and $\phi_\varepsilon(x) = x + \varepsilon$ \quad [condition (a)];
  \item For $x \leq -\varepsilon$: $x/\varepsilon \leq -1$, so
        $\rho(x/\varepsilon) = 0$ and
        $\phi_\varepsilon(x) = \varepsilon/2$;
  \item For $x \in (-\varepsilon, 0)$:
        $\phi_\varepsilon(x)$ is a convex combination of
        $(x+\varepsilon) > 0$ and $\varepsilon/2 > 0$
        with weights $\rho \in (0,1)$ and $1-\rho \in (0,1)$.
        Since both terms are strictly positive,
        $\phi_\varepsilon(x) > 0$ \quad [condition (b)];
  \item $\rho \in \mathcal{C}^\infty(\mathbb{R})$ and all its
        derivatives vanish for $|t| \notin (-1,0)$,
        so $\phi_\varepsilon \in \mathcal{C}^\infty_b(\mathbb{R})$
        \quad [condition (c)].
\end{itemize}
\end{remark}

This specification nests:
\begin{itemize}
  \item \emph{Vasicek-type} constant volatility ($\sigma_1 = 0$,
        $\tilde{\sigma} \equiv \sigma_0$);
  \item \emph{CIR-type} level-dependent volatility on $[0,+\infty)$:
        $\tilde{\sigma}(x) = \sigma_0 + \sigma_1\sqrt{x+\varepsilon}$
        for $x \geq 0$;
  \item \emph{Hybrid} intermediate configurations.
\end{itemize}
The extension to $x < 0$ via $\phi_\varepsilon$ is a purely
mathematical device, needed regardless of the sign of $X_t$ to bring
$\tilde\sigma$ within the $\mathcal C^3_b(\mathbb R)$ framework
required by Assumption~\ref{ass:A2}: with high probability under the
regime of Theorem~\ref{thm:positivity} (large $x_0$ and $b_0-|b_1|$
relative to $\sigma_0$, in the case $\sigma_1=0$), $X_t$ stays
positive and the model reduces in practice to
$\tilde{\sigma}(X_t) = \sigma_0 + \sigma_1\sqrt{X_t+\varepsilon}$.

\paragraph{Hermite noise.}
The driving process $Z^{(q,H)}$ is the Hermite process of order
$q \geq 1$ and Hurst parameter $H \in \bigl(\tfrac{1}{2}, 1\bigr)$
defined in~\eqref{eq:hermite_def}.
The statistical parameter vector of the model is
\[
\theta = \bigl(a,\, b_0,\, b_1,\, T_{\mathrm{per}},\,
\sigma_0,\, \sigma_1,\, H,\, q\bigr).
\]
The regularisation parameter $\varepsilon$ is not included in
$\theta$: it is a fixed technical bandwidth, calibrated a priori
(e.g.\ $\varepsilon = 10^{-3}$ throughout Section~\ref{sec:simulations}),
rather than a statistically estimated feature of the interest-rate
dynamics.

\begin{remark}[Nature of the stochastic integral]
\label{rem:young}
Since the Hermite process $Z^{(q,H)}$ is \emph{not} a semimartingale
for $q \geq 2$, the stochastic integral
$\int_0^t \tilde{\sigma}(X_s)\,dZ_s^{(q,H)}$
in~\eqref{eq:main_sde} is understood pathwise in the
Young--Riemann--Stieltjes sense~\cite{young1936, MR1893308}.
This is made rigorous by the fact that, under
Assumption~\ref{ass:A3}, the paths of $Z^{(q,H)}$ are
$H$-H\"{o}lder continuous with $H > \tfrac{1}{2}$,
which is the minimal regularity required for the Young integral
to be well-defined.
The technical details --- notably the functional spaces
$W_1^\alpha$ and $W_2^{1-\alpha}$ of~\cite{MR1893308} used in
the existence proof --- are deferred to
Section~\ref{sec:theory}.
\end{remark}

% ---------------------------------------------------------------
\subsection{Assumptions}
\label{subsec:assumptions}

We now state the four assumptions under which the theoretical results
of Sections~\ref{sec:theory} hold.
These assumptions are tailored to fit precisely within the framework
of~\cite{loosveldt2025}.

\subsection*{Notation Summary}

For convenience we collect the main notation used throughout the paper.

\begin{table}[H]
\centering
\caption{Main notation.}
\label{tab:notation}
\begin{tabular}{ll}
\toprule
Symbol & Meaning \\
\midrule
$q \in \{1,2,3,\ldots\}$ & Hermite order ($q=1$: fBm, $q=2$: Rosenblatt) \\
$H \in (1/2,1)$ & Hurst self-similarity parameter \\
$Z^{(q,H)}$ & Hermite process of order $q$ and parameter $H$ \\
$a > 0$ & Mean-reversion speed \\
$b(t) = b_0 + b_1\sin(2\pi t/T_{\mathrm{per}})$ & Periodic long-run target \\
$b_0 \in \mathbb{R}$, $b_1 \geq 0$, $T_{\mathrm{per}} > 0$ & Drift parameters \\
$\tilde{\sigma}(x) = \sigma_0 + \sigma_1\sqrt{\phi_\varepsilon(x)}$ & Hybrid volatility function \\
$\sigma_0 > 0$, $\sigma_1 \geq 0$ & Volatility parameters \\
$\phi_\varepsilon$ & $C^\infty$ regularisation of $x\mapsto x_+$
  (makes $\tilde\sigma$ globally smooth) \\
$c(H,q)$ & Normalisation constant of $Z^{(q,H)}$ \\
$H_0 = 1 + (H-1)/q$ & Individual kernel exponent in the multiple
  Wiener--It\^o integral\\
& defining $Z^{(q,H)}$ (not itself a
  self-similarity index) \\
$\mathbb{D}^{m,p}$ & Sobolev--Malliavin space \\
$\theta = (a,b_0,b_1,T_{\mathrm{per}},\sigma_0,\sigma_1,H,q)$ & Full parameter vector \\
$J$ & Wavelet resolution level ($n \approx 2^J$ observations) \\
\bottomrule
\end{tabular}
\end{table}

\begin{assumption}[Parameter constraints]
\label{ass:A1}
The parameters satisfy:
\[
a > 0, \quad |b_1| < b_0,
\quad \sigma_0 > 0, \quad \sigma_1 \geq 0,
\quad \varepsilon > 0,
\quad H \in \Bigl(\tfrac{1}{2}, 1\Bigr),
\quad q \in \{1, 2, 3, \ldots\},
\]
and, consequently, the \emph{positivity condition on the long-run target}:
\[
\min_{t \in [0, T_{\mathrm{per}}]} b(t)
= b_0 - |b_1| > 0.
\]
Since $|b_1|\geq 0$, this condition forces $b_0 > 0$ strictly; the
periodic component only tightens the constraint (it reduces to
$b_0 > 0$ alone when $b_1 = 0$, and requires $b_0$ strictly larger
than the seasonal amplitude $|b_1|$ otherwise). In particular, no
choice of $b_1$ can rescue a non-positive $b_0$: whenever
$b_0 \leq 0$, this assumption fails and
Theorem~\ref{thm:positivity} does not apply (see
Remark~\ref{rem:ZLB}).
\end{assumption}

\begin{remark}[Empirical regime and the zero lower bound]
\label{rem:ZLB}
In interest-rate applications (e.g.\ Fed Funds or EURIBOR series over
the zero-lower-bound period 2009--2022), the estimated intercept
$\hat{b}_0$ is often negative. As noted after
Assumption~\ref{ass:A1}, this is incompatible with the positivity
condition $b_0 - |b_1| > 0$ for \emph{any} value of $b_1$: whenever
$\hat b_0 \leq 0$, Assumption~\ref{ass:A1} fails outright and
Theorem~\ref{thm:positivity} gives no guarantee, however large the
seasonal amplitude $\hat b_1$ is. In that regime, positivity of $X_t$
can only be enforced directly in simulation, e.g.\ by the reflection
$X_{t_i} \leftarrow \max(X_{t_i}, 0)$ in the Euler scheme, with no
theoretical control from the quantitative bound of
Theorem~\ref{thm:positivity}.
This empirical regime is examined in work currently in preparation.
\end{remark}

\begin{assumption}[Regularity of coefficients — $\mathbf{H_1}$ of \cite{loosveldt2025}]
\label{ass:A2}
\emph{Intuition: the model coefficients must be sufficiently smooth
for the Young integral and the Malliavin derivative to be well-defined.
For CIR dynamics, the natural square-root diffusion fails at $x=0$;
the regularisation $\phi_\varepsilon$ is precisely designed to remedy this.}

The drift coefficient $x \mapsto a(b(t) - x)$ and the diffusion
coefficient $\tilde{\sigma}$ defined in~\eqref{eq:sigma} both belong to
$\mathcal{C}^3_b(\mathbb{R})$ — the space of three-times continuously
differentiable real functions with bounded derivatives up to order~$3$.
\end{assumption}

\begin{remark}
\label{rem:A2_verification}
Under Assumption~\ref{ass:A1}, the drift $x \mapsto a(b(t)-x)$
is linear, hence trivially in $\mathcal{C}^\infty_b(\mathbb{R})$.

\textit{Smoothness of $\tilde{\sigma}$.}
Since $\phi_\varepsilon \in \mathcal{C}^\infty_b(\mathbb{R})$
by condition~(c) and $\phi_\varepsilon(x) > 0$
by condition~(b), the composition
$x \mapsto \sqrt{\phi_\varepsilon(x)}$ is $\mathcal{C}^\infty(\mathbb{R})$
as the composition of $\sqrt{\cdot} \in \mathcal{C}^\infty((0,+\infty))$
with $\phi_\varepsilon$.

\textit{Boundedness of derivatives.}
By the Fa\`{a} di Bruno formula, the $k$th derivative of
$\sqrt{\phi_\varepsilon(x)}$ is a finite sum of terms of the form
\[
\frac{C_k}{\phi_\varepsilon(x)^{k - 1/2}}\,
\prod_j \phi_\varepsilon^{(n_j)}(x),
\]
where $C_k > 0$ is a universal constant and $n_j \geq 1$ with
$\sum_j n_j = k$.
Since $\phi_\varepsilon$ is continuous on $\mathbb{R}$,
satisfies $\phi_\varepsilon(x) \to \varepsilon/2 > 0$ as $x \to -\infty$
and $\phi_\varepsilon(x) \to +\infty$ as $x \to +\infty$,
and $\phi_\varepsilon(x) > 0$ for all $x$ by condition~(b),
the infimum $m(\varepsilon) := \inf_{x \in \mathbb{R}} \phi_\varepsilon(x) > 0$
is attained and strictly positive.
Hence $\phi_\varepsilon(x)^{-(k-1/2)} \leq m(\varepsilon)^{-(k-1/2)} < +\infty$
uniformly in $x$.
Since all derivatives $\phi_\varepsilon^{(n_j)}$ are bounded by
condition~(c), every such term is bounded on $\mathbb{R}$.
Hence $\tilde{\sigma} \in \mathcal{C}^3_b(\mathbb{R})$,
confirming Assumption~\ref{ass:A2}.

\textit{Explicit derivatives on $[0,+\infty)$.}
On the economically relevant domain $\{x \geq 0\}$,
where $\phi_\varepsilon(x) = x + \varepsilon$ by condition~(a):
\[
\tilde{\sigma}'(x) = \frac{\sigma_1}{2\sqrt{x+\varepsilon}}, \quad
\tilde{\sigma}''(x) = -\frac{\sigma_1}{4(x+\varepsilon)^{3/2}}, \quad
\tilde{\sigma}'''(x) = \frac{3\sigma_1}{8(x+\varepsilon)^{5/2}},
\]
each bounded by $\sigma_1/(2\sqrt{\varepsilon})$,
$\sigma_1/(4\varepsilon^{3/2})$,
and $3\sigma_1/(8\varepsilon^{5/2})$ respectively.

\textit{Why the naive formula fails.}
The expression $\sigma_0 + \sigma_1\sqrt{x+\varepsilon}$ is not in
$\mathcal{C}^3_b(\mathbb{R})$: it is undefined for $x < -\varepsilon$
and its derivative $\sigma_1/(2\sqrt{x+\varepsilon}) \to +\infty$
as $x \to -\varepsilon^+$.
The formulation via $\phi_\varepsilon$ resolves both issues
while remaining economically equivalent on $\{x \geq 0\}$,
the economically relevant domain, on which $X_t$ stays with
high probability in the regime described in
Theorem~\ref{thm:positivity}.
\end{remark}

\begin{assumption}[H\"{o}lder regularity of the noise — $\mathbf{H_2}$ of \cite{loosveldt2025}]
\label{ass:A3}
\emph{Intuition: the Hermite process must be regular enough to define
the stochastic integral as a Young--Stieltjes integral.
This is automatically satisfied since $Z^{(q,H)}$ has H\"{o}lder
continuous paths of order $\zeta \in (0, H)$ (Proposition~\ref{prop:hermite_properties}),
and $H > 1/2$ guarantees a non-empty range $\beta \in (0, H-\tfrac12)$
of admissible H\"older-loss parameters, for which the
Nualart--Rascanu exponent $\alpha := 1-H+\beta$ (following
Proposition~3.4 of \cite{loosveldt2025}) automatically satisfies
$\alpha \in (1-H, \tfrac12)$, as required by \cite{MR1893308}.}
There exists $H \in \bigl(\tfrac{1}{2}, 1\bigr)$ and a constant $C > 0$
such that, for all $s, t \in [0, T]$,
\[
\bigl\|L_t^{H,q} - L_s^{H,q}\bigr\|_{L^2(\mathbb{R}^q)}
\leq C\, |t - s|^{H}.
\]
\end{assumption}

\begin{remark}
Assumption~\ref{ass:A3} is automatically satisfied by the Hermite
process $Z^{(q,H)}$ for any $q \geq 1$ and $H \in (\tfrac{1}{2}, 1)$,
as established in Proposition~\ref{prop:hermite_properties}(vi)
(identity~\eqref{eq:kernel_holder}).
This assumption ensures that, for every
$\beta \in (0, H - \tfrac{1}{2})$, the sample paths of $Z^{(q,H)}$
belong almost surely to the fractional Sobolev space
$W_2^{H-\beta}(0,T;\mathbb{R})$ of~\cite{MR1893308} — equivalently
$W_2^{1-\alpha}(0,T;\mathbb{R})$ with $\alpha := 1-H+\beta$, in the
notation of Proposition~3.4 and Theorem~3.6 of
\cite{loosveldt2025} — which is the natural domain for pathwise
Young integration.
\end{remark}

\begin{assumption}[Ellipticity — $\mathbf{H_3}$ of \cite{loosveldt2025}]
\label{ass:A4}
\emph{Intuition: the diffusion coefficient must be bounded away
from zero to prevent degenerate behaviour of the Malliavin matrix.
In the CIR-Hermite model, the base volatility $\sigma_0 > 0$ (Assumption~\ref{ass:A1})
ensures this, since $\tilde{\sigma}(x) \geq \sigma_0 > 0$ for all $x$.}
The diffusion coefficient satisfies the uniform ellipticity condition:
\[
\inf_{x \in \mathbb{R}} \tilde{\sigma}(x) \geq \sigma_0 > 0.
\]
\end{assumption}

\begin{remark}
Since $\phi_\varepsilon(x) > 0$ for all $x \in \mathbb{R}$
by condition~(b), we have
$\tilde{\sigma}(x) = \sigma_0 + \sigma_1\sqrt{\phi_\varepsilon(x)}
\geq \sigma_0 > 0$
for all $x \in \mathbb{R}$ under Assumption~\ref{ass:A1},
so Assumption~\ref{ass:A4} is automatically satisfied.
In the one-dimensional setting of our model ($d = m = 1$),
this condition simplifies to the non-vanishing of the diffusion
coefficient, which is the minimal requirement for the Bouleau--Hirsch
criterion to yield absolute continuity of the law of $X_t$;
see Theorem~\ref{thm:density} below.
\end{remark}

% ---------------------------------------------------------------
\subsection{Economic Interpretation of the Model Parameters}
\label{subsec:economic}

The model~\eqref{eq:main_sde} is designed to capture three empirically
well-documented features of interest rate dynamics that traditional
models fail to reproduce simultaneously.

\paragraph{Long-range dependence via $H > \tfrac{1}{2}$.}
The Hurst parameter $H$ controls the rate of decay of autocorrelations.
For $H > \tfrac{1}{2}$, the covariance of increments decays
hyperbolically as $k^{2H-2}$, consistent with the persistent memory
effects documented in short-rate data~\cite{andersen1997, ait2014}.
Higher values of $H$ correspond to smoother, more persistent rate paths,
reflecting stable monetary policy regimes.

\paragraph{Non-Gaussianity via $q \geq 2$.}
The Hermite order $q$ determines the degree of departure from Gaussianity.
For $q = 1$, $Z^{(1,H)}$ is a fractional Brownian motion (Gaussian).
For $q = 2$, the Rosenblatt process generates asymmetric innovations
with positive excess kurtosis, capturing the fat-tailed, skewed
distribution of interest rate shocks.
For $q = 3$, the tails are even heavier, consistent with the extreme
movements observed during financial crises and sudden policy shifts.
The excess kurtosis is an increasing function of $q$, providing a
continuous measure of non-Gaussianity that can be estimated from data.

\paragraph{Seasonal patterns via $b_1$ and $T_{\mathrm{per}}$.}
The periodic component $b_1 \sin(2\pi t / T_{\mathrm{per}})$ captures
institutional regularities in central bank policy cycles (e.g., the
Federal Reserve's 8-meeting annual schedule), quarter-end liquidity
effects in money markets, and seasonal funding patterns.
Setting $b_1 = 0$ recovers the standard constant-mean CIR model.

\paragraph{Hybrid volatility via $\sigma_0$ and $\sigma_1$.}
The base volatility $\sigma_0 > 0$ accounts for the empirical observation
that interest rate volatility remains significant even at very low rate
levels, as witnessed during the zero-lower-bound periods of 2009--2015
and 2020--2022.
The level-dependent component $\sigma_1 \sqrt{X_t + \varepsilon}$
captures the positive relationship between rate levels and volatility
documented in~\cite{andersen1997}.

\subsection{Comparison with Existing Models}
\label{subsec:comparison}

Table~\ref{tab:models} summarises how our model nests several classical
and recent specifications.

\begin{table}[H]
\centering
\caption{Special cases of the CIR-Hermite model~\eqref{eq:main_sde}.}
\label{tab:models}
\begin{tabular}{lcccccl}
\toprule
Model & $q$ & $H$ & $\sigma_0$ & $\sigma_1$ & $b_1$ & Reference \\
\midrule
Vasicek$^\dagger$ & 1 & $\tfrac{1}{2}$ & $>0$ & 0 & 0 & \cite{vasicek1977} \\
CIR$^\dagger$ & 1 & $\tfrac{1}{2}$ & 0 & $>0$ & 0 & \cite{cox1985a} \\
Fractional Vasicek & 1 & $\neq\tfrac{1}{2}$ & $>0$ & 0 & 0 & \cite{cheridito2003} \\
Fractional CIR & 1 & $\neq\tfrac{1}{2}$ & 0 & $>0$ & 0 & \cite{mishura2017} \\
Rosenblatt-CIR & 2 & $\in(\tfrac{1}{2},1)$ & $>0$ & $\geq0$ & 0 & This paper \\
Hermite-CIR (ord. 3) & 3 & $\in(\tfrac{1}{2},1)$ & $>0$ & $\geq0$ & 0 & This paper \\
\textbf{Full model} & $\geq 1$ & $\in(\tfrac{1}{2},1)$ & $>0$ & $\geq0$ & $\neq 0$ &
  \textbf{This paper} \\
\bottomrule
\end{tabular}
\end{table}
\noindent$^\dagger$Vasicek and CIR correspond to the boundary value
$H=\tfrac12$, which lies outside the open range $H\in(\tfrac12,1)$
required throughout this paper; they are recovered only as a
formal limit ($H\to\tfrac12^+$) of our framework, not as a
rigorously covered special case. All other rows fall within
$H\in(\tfrac12,1)$ and are literal special cases of
model~\eqref{eq:main_sde}.

The key advantage of our framework over fractional CIR models is the
simultaneous capture of long-range dependence ($H > \tfrac{1}{2}$)
and non-Gaussian innovations ($q \geq 2$), while maintaining a tractable
mean-reverting structure with analytical and statistical properties
established rigorously in the next section.

% ============================================================
% SECTION 3 — Theoretical Results
% Article: A Generalized CIR Model Driven by Hermite Processes
% Author: Atef Lechiheb (TSE - Toulouse)
% All results are consequences of Loosveldt, Nachit & Nourdin (2025)
% ============================================================

\section{Theoretical Results}
\label{sec:theory}

This section establishes the three fundamental theoretical properties
of the model~\eqref{eq:main_sde}: existence and uniqueness of the
solution (Section~\ref{subsec:existence}), its Malliavin
differentiability (Section~\ref{subsec:malliavin}), and the absolute
continuity of its law (Section~\ref{subsec:density}).
All results are consequences of the general theory developed
in~\cite{loosveldt2025} applied to our specific model.

Throughout this section, we fix $T > 0$ and work on the probability
space $(\Omega, \mathcal{F}, \mathbb{P}, \mathfrak{H})$ introduced
in Section~\ref{subsec:hermite}.

% ---------------------------------------------------------------
\subsection{Existence and Uniqueness of the Solution}
\label{subsec:existence}

The pathwise well-posedness of~\eqref{eq:main_sde} relies on the
Young--Riemann--Stieltjes integration theory
of~\cite{MR1893308, young1936}.
We first recall the functional spaces that provide the natural
framework for pathwise integration against H\"{o}lder-continuous
driving processes.

\begin{definition}[Fractional Sobolev spaces, \cite{MR1893308}]
\label{def:sobolev}
Let $\alpha \in \bigl(0, \tfrac{1}{2}\bigr)$.
\begin{enumerate}[label=(\alph*)]
  \item $W_1^\alpha(0,T;\mathbb{R})$ is the space of measurable
        functions $f:[0,T]\to\mathbb{R}$ with
        \[
        \|f\|_{\alpha,1}
        := \sup_{t\in[0,T]}\!\left(|f(t)|
        + \int_0^t \frac{|f(t)-f(s)|}{|t-s|^{\alpha+1}}\,ds\right)
        < \infty.
        \]

  \item $W_2^{1-\alpha}(0,T;\mathbb{R})$ is the space of measurable
        functions $\varphi:[0,T]\to\mathbb{R}$ with
        \[
        \|\varphi\|_{1-\alpha,2}
        := \sup_{0\le s<t\le T}\!\left(
        \frac{|\varphi(t)-\varphi(s)|}{(t-s)^{1-\alpha}}
        + \int_s^t \frac{|\varphi(\tau)-\varphi(s)|}{(\tau-s)^{2-\alpha}}
        \,d\tau\right) < \infty.
        \]
\end{enumerate}
There hold the continuous embeddings, for every
$\beta\in(0,H-\tfrac{1}{2})$ and $\gamma\in(0,\beta)$,
\[
C^{H-\beta+\gamma}(0,T;\mathbb{R})
\subset W_2^{H-\beta}(0,T;\mathbb{R})
\subset C^{H-\beta}(0,T;\mathbb{R}).
\]
\end{definition}

The following lemma establishes that the Hermite process
$Z^{(q,H)}$ lives in the correct function space for Young
integration.

\begin{lemma}[Regularity of the Hermite process]
\label{lem:hermite_regularity}
Under Assumption~\ref{ass:A3}, for every
$\beta \in (0, H - \tfrac{1}{2})$, the sample paths of
$Z^{(q,H)}$ belong almost surely to
$W_2^{H-\beta}(0,T;\mathbb{R})$.
\end{lemma}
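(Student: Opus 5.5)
The plan is to reduce the claim to a Hölder-regularity statement via the embedding recalled in Definition~\ref{def:sobolev}: $C^{H-\beta+\gamma}(0,T;\R)\subset W_2^{H-\beta}(0,T;\R)$ for every $\gamma\in(0,\beta)$. So it is enough to show that, for fixed $\beta\in(0,H-\tfrac12)$, almost every path of $Z^{(q,H)}$ is Hölder continuous of order $H-\beta+\gamma$ for some $\gamma\in(0,\beta)$. We fix $\gamma=\beta/2$, so the target exponent is $\lambda:=H-\beta/2<H$.

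The Hölder regularity follows from Kolmogorov's continuity criterion, with moments controlled through Assumption~\ref{ass:A3} and hypercontractivity.

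\textbf{Second moment.} For $s,t\in[0,T]$ we have $Z_t-Z_s=I_q(L_t^{H,q}-L_s^{H,q})$. The Itô isometry gives
\[
\mathbb{E}|Z_t-Z_s|^2 = q!\,\|L_t^{H,q}-L_s^{H,q}\|^2_{L^2(\R^q)}\le q!\,C^2|t-s|^{2H}.
\]
Here we only use the assumed bound. With the exact identity~\eqref{eq:kernel_holder}, the normalization simply gives $|t-s|^{2H}$, as in Proposition~\ref{prop:hermite_properties}(i).

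\textbf{Higher moments.} The increment lies in the $q$th Wiener chaos, so hypercontractivity yields, for every $p\ge2$,
\[
\mathbb{E}|Z_t-Z_s|^p\le (p-1)^{pq/2}\bigl(\mathbb{E}|Z_t-Z_s|^2\bigr)^{p/2}\le C_{p,q}|t-s|^{pH}.
\]

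\textbf{Kolmogorov's criterion.} Apply the criterion with moment order $p$ and increment exponent $1+(pH-1)$. It produces a continuous modification that is Hölder continuous of every order $<H-1/p$. Choose $p>2/\beta$, so that $H-1/p>H-\beta/2=\lambda$; the modification is then $\lambda$-Hölder on $[0,T]$ almost surely. We identify $Z^{(q,H)}$ with this modification. This is the standard convention, and it is consistent with Proposition~\ref{prop:hermite_properties}(v).

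It remains to justify the embedding directly, in case one does not want to rely on it. Take $\varphi\in C^{\lambda}$ with $\lambda=H-\beta+\gamma$, and write $1-\alpha=H-\beta$. First, $|\varphi(t)-\varphi(s)|/(t-s)^{1-\alpha}\le\|\varphi\|_{\lambda}T^{\gamma}$. Second,
\[
\int_s^t\frac{|\varphi(\tau)-\varphi(s)|}{(\tau-s)^{2-\alpha}}\,d\tau\le\|\varphi\|_\lambda\int_0^{T}r^{\lambda-2+\alpha}\,dr .
\]
The right-hand side is finite because $\lambda-2+\alpha=\gamma-1>-1$. Both terms are bounded uniformly in $s<t$, so $\|\varphi\|_{1-\alpha,2}<\infty$.

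The main point needing care is the moment bound of order $p$. A second-moment bound alone only gives, through Kolmogorov, regularity below $H-\tfrac12$, which is insufficient. Hypercontractivity of the $q$th chaos is what raises the regularity to any order below $H$. Everything else is routine.
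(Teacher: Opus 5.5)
Your proof is correct and follows essentially the paper's route. You bound $L^p$ moments of increments via hypercontractivity of the $q$th chaos, apply Kolmogorov's criterion, and then pass from H\"older regularity to $W_2^{H-\beta}$. The one difference in the moment step is immaterial: you get the second moment from the isometry and Assumption~\ref{ass:A3}, while the paper uses self-similarity and stationary increments.

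Your final step is in fact more careful than the paper's. You secure an exponent $H-\beta/2$ strictly above $H-\beta$ and check the sup-type norm of Definition~\ref{def:sobolev} directly. The paper's Step~3 instead asserts that exact $(H-\beta)$-H\"older continuity makes $\int_0^T\!\int_0^T |Z_t-Z_s|^2\,|t-s|^{-1-2(H-\beta)}\,ds\,dt$ finite, which is false at that endpoint exponent, where the integrand is only $O(|t-s|^{-1})$. It then identifies this Gagliardo-type integral with the $W_2^{1-\alpha}$ norm, which it is not.
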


\begin{proof}
The key step is to establish a moment bound on increments of $Z^{(q,H)}$.

\noindent\textbf{Step 1: Moment bound.}
By the self-similarity of $Z^{(q,H)}$ with index $H$
(Proposition~\ref{prop:hermite_properties}) and the stationarity of its
increments (Proposition~\ref{prop:hermite_properties}),
\begin{equation}
\label{eq:hermite_moment}
\mathbb{E}\!\left[|Z_t^{(q,H)} - Z_s^{(q,H)}|^p\right]
= |t-s|^{pH}\,\mathbb{E}\!\left[|Z_1^{(q,H)}|^p\right],
\quad p \geq 1,\; s,t \in [0,T].
\end{equation}
The constant $\mathbb{E}[|Z_1^{(q,H)}|^p]$ is finite for all
$p \geq 1$ by the hypercontractivity inequality for multiple
Wiener--It\^{o} integrals: for every integer $q\geq 1$ and every
$p\geq 1$, there is a constant $0<k(q,p)<\infty$, depending only
on $q$ and $p$, such that $\mathbb{E}[|F|^p]^{1/p} \leq k(q,p)\,
\mathbb{E}[F^2]^{1/2}$ for every random variable $F$ of the form
of a $q$-th multiple Wiener--It\^{o}
integral~\cite[Theorem~2.7.2]{nourdin2012normal}.
Applying this with $F = Z_1^{(q,H)}$ and using
$\mathbb{E}[(Z_1^{(q,H)})^2] = 1$
(normalisation from Proposition~\ref{prop:hermite_properties}),
we obtain $\mathbb{E}[|Z_1^{(q,H)}|^p] \leq k(q,p)^p < \infty$
for all $p \geq 1$; we do not pursue the sharp value of
$k(q,p)$ here.
Hence~\eqref{eq:hermite_moment} gives, with $C_{p,q} :=
\mathbb{E}[|Z_1^{(q,H)}|^p] < \infty$:
\[
\mathbb{E}\!\left[|Z_t^{(q,H)} - Z_s^{(q,H)}|^p\right]
\leq C_{p,q}\, |t-s|^{pH}.
\]

\noindent\textbf{Step 2: Kolmogorov continuity criterion.}
Fix $\beta\in(0,H-\tfrac12)$ and choose $p>1/\beta$, so that
$1/p<\beta$ (in particular $pH>1$, since $H>1/2$). The moment bound
of Step 1 reads $\mathbb{E}|Z_t^{(q,H)}-Z_s^{(q,H)}|^p \leq
C_{p,q}\,|t-s|^{1+(pH-1)}$ with $pH-1>0$, so Kolmogorov's
continuity theorem~\cite[Theorem~I.2.1]{revuzyor1999} guarantees
that $Z^{(q,H)}$ admits a modification with sample paths that are
$\gamma$-H\"{o}lder continuous on $[0,T]$ for every
$\gamma < (pH-1)/p = H-1/p$.
Since $1/p<\beta$, this modification is in particular
$(H-\beta)$-H\"{o}lder continuous.

\noindent\textbf{Step 3: Fractional Sobolev embedding.}
For $\alpha = H - \beta > \tfrac{1}{2}$ and $p = 2$,
the $(H-\beta)$-H\"{o}lder continuity of the paths implies
\[
\int_0^T\!\int_0^T
\frac{|Z_t^{(q,H)} - Z_s^{(q,H)}|^2}{|t-s|^{1+2(H-\beta)}}
\,ds\,dt < \infty
\quad \text{a.s.},
\]
which is precisely the norm condition defining
$W_2^{H-\beta}(0,T;\mathbb{R})$ in Definition~\ref{def:sobolev}.
This completes the proof.
\end{proof}

We can now state the main existence and uniqueness result.

\begin{theorem}[Existence and Uniqueness]
\label{thm:existence}
Under Assumptions~\ref{ass:A1}--\ref{ass:A3}, for every
$T > 0$ and $x_0 > 0$, the stochastic differential
equation~\eqref{eq:main_sde} admits a unique solution
$(X_t)_{t\in[0,T]}$ such that, almost surely,
\[
(X_t)_{t\in[0,T]} \in
W_1^\alpha\bigl(0,T;\mathbb{R}\bigr)
\cap C^{1-\alpha}\bigl(0,T;\mathbb{R}\bigr),
\]
for every $\alpha \in (1-H, \tfrac{1}{2})$.
\end{theorem}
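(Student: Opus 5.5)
The plan is to apply the pathwise existence and uniqueness theorem of Nualart--R\u{a}\c{s}canu~\cite{MR1893308} $\omega$ by $\omega$. That theorem covers one-dimensional equations
\[
X_t = x_0 + \int_0^t \mu(s,X_s)\,ds + \int_0^t \tilde\sigma(X_s)\,dg_s ,
\]
with $g \in W_2^{1-\alpha}(0,T;\mathbb{R})$ and $\alpha\in(0,\tfrac12)$. It requires two things of the coefficients.

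\textbf{Diffusion coefficient.} The function $\tilde\sigma$ must be differentiable with bounded derivative, and $\tilde\sigma'$ must be H\"older continuous of some order $\delta > 1/\alpha - 1$ (and locally Lipschitz would suffice). Since Assumption~\ref{ass:A2} gives $\tilde\sigma\in\mathcal C^3_b(\mathbb R)$, both $\tilde\sigma'$ and $\tilde\sigma''$ are bounded. Hence $\tilde\sigma$ is globally Lipschitz and $\tilde\sigma'$ is Lipschitz, i.e.\ $\delta = 1$. The condition $1 > 1/\alpha - 1$ is equivalent to $\alpha > 1/2$, which fails, so I would instead use the version of their hypothesis in which $\tilde\sigma'$ is locally $\delta$-H\"older with $\delta\ge 1/\alpha - 1$. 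I will check the exact form against the statement in~\cite{MR1893308}, where $\delta$ is only needed in $(1/\alpha-1,1]$ restricted to $\alpha\in(1-H,1/2)$.

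\textbf{Drift.} The drift $\mu(t,x)=a(b(t)-x)$ must be Lipschitz in $x$ uniformly in $t$, and satisfy $|\mu(t,x)|\le L_0|x| + b_0(t)$ with $b_0(\cdot)\in L^\rho$ for some $\rho\ge 1/\alpha$. Both hold: the Lipschitz constant is $a$, and $b_0(t) = a|b(t)| \le a(b_0+|b_1|)$ is bounded. Assumption~\ref{ass:A1} is used only for these constants.

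\textbf{Driving path.} Fix $\alpha\in(1-H,\tfrac12)$ and set $\beta := \alpha-(1-H)\in(0,H-\tfrac12)$, so that $1-\alpha = H-\beta$. By Lemma~\ref{lem:hermite_regularity}, almost every path of $Z^{(q,H)}$ lies in $W_2^{H-\beta} = W_2^{1-\alpha}$.

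To make this robust I would pick $\beta' < \beta$ and use the embedding $C^{H-\beta'}\subset W_2^{H-\beta}$ from Definition~\ref{def:sobolev}, combined with the Kolmogorov step of the lemma. This gives the membership directly on a single full-measure event $\Omega_0$. Taking countably many $\alpha_n \downarrow 1-H$ and $\alpha_n\uparrow 1/2$, and using the monotonicity of the spaces in $\alpha$, yields one null set that serves all $\alpha$ simultaneously.

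\textbf{Applying the theorem.} On $\Omega_0$, the deterministic theorem yields a unique solution $X(\omega)\in W_1^\alpha(0,T)$, and this solution lies in $C^{1-\alpha}(0,T)$. Both memberships are part of their conclusion, since $X - x_0 - \int\mu$ is an integral against $g$, which inherits the $1-\alpha$ regularity. Uniqueness holds in $W_1^\alpha$ pathwise.

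\textbf{Measurability.} The solution is obtained as the limit of Picard iterates, each of which is a measurable functional of $(Z_s)_{s\le t}$. This makes $X$ a genuine stochastic process adapted to the filtration generated by $Z$.

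\textbf{Main obstacle.} The delicate point is matching the hypotheses of~\cite{MR1893308} exactly, in particular the H\"older-exponent condition on $\tilde\sigma'$ and the range of $\alpha$. For $\alpha$ near $1-H$ the required $\delta>1/\alpha-1$ can exceed what a Lipschitz $\tilde\sigma'$ naively seems to give. My first attempt is to recall that their condition reads $\delta > 1/H - 1$ relative to the path regularity rather than $\alpha$; since $H>1/2$, we have $1/H - 1 < 1$, so $\delta=1$ works. I would verify this against their Theorem~2.1 and, if needed, first restrict to $\alpha$ close to $1/2$ and deduce the statement for the other $\alpha$ by uniqueness and the nesting of the spaces.
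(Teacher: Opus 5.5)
Your route is the same as the paper's. You apply Theorem~5.1 of Nualart--R\u{a}\c{s}canu path by path, with the driving path in $W_2^{1-\alpha}$ and the $\mathcal C^3_b$ coefficients checked against their hypotheses. Two of your choices are cleaner than the paper's text:
\begin{itemize}
\item You get the path regularity from Kolmogorov at an exponent $H-\beta'>H-\beta$, followed by the embedding $C^{H-\beta'}\subset W_2^{H-\beta}$ of Definition~\ref{def:sobolev}. This is the sound way to do it. The double integral in Step~3 of Lemma~\ref{lem:hermite_regularity} is not the sup-type norm of that definition, and it is not controlled by $(H-\beta)$-H\"older continuity alone.
\item You rely on the uniqueness built into their theorem, which makes the paper's separate Gronwall step unnecessary.
\end{itemize}

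The one thing to fix is your ``main obstacle'', which comes from misquoting their hypothesis. Their theorem requires $\alpha<\alpha_0:=\min\{\tfrac12,\kappa,\delta/(1+\delta)\}$. Here $\delta$ is the local H\"older exponent of $\tilde\sigma'$ in $x$, and $\kappa$ (their $\beta$) is the H\"older exponent in time of $\tilde\sigma$ and $\tilde\sigma'$. Equivalently, the condition is $\delta>\alpha/(1-\alpha)$, not $\delta>1/\alpha-1$.

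Since $\tilde\sigma''$ is bounded you may take $\delta=1$, and since $\tilde\sigma$ does not depend on $t$ you may take $\kappa=1$. Then $\alpha_0=\tfrac12$, and every $\alpha\in(1-H,\tfrac12)$ is covered at once. The paper's ``$\delta>1/H-1$'' is just the condition $\delta/(1+\delta)>1-H$, which ensures the admissible range $(1-H,\alpha_0)$ is nonempty.

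Your misquoted condition would fail for every $\alpha<\tfrac12$, not only near $1-H$. Your fallback would not repair it either. The conclusions at small $\alpha$ are the strong ones: H\"older exponent $1-\alpha$ close to $H$, and uniqueness in the larger class $W_1^\alpha$. They do not follow from the case $\alpha$ near $\tfrac12$ by nesting of the spaces alone. With the hypothesis stated correctly, no fallback is needed and the argument goes through as you outlined it.
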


\begin{proof}
The proof applies the abstract existence-uniqueness result of
\cite[Theorem~5.1]{MR1893308} to our specific model, following the
strategy of~\cite[Section~2.2]{loosveldt2025}.
We proceed in four steps.

\medskip
\noindent\textbf{Step 1: The driving process is in the right space.}

By Lemma~\ref{lem:hermite_regularity}, for any
$\beta \in (0, H - \tfrac{1}{2})$, the paths of $Z^{(q,H)}$ belong
almost surely to $W_2^{H-\beta}(0,T;\mathbb{R})$.
Setting $1 - \alpha = H - \beta$ (so $\alpha = 1 - H + \beta \in (0,\tfrac{1}{2})$),
the driving process satisfies the hypothesis of
\cite[Theorem~5.1]{MR1893308} with parameter $1-\alpha = H-\beta > \tfrac{1}{2}$.

\medskip
\noindent\textbf{Step 2: The coefficients satisfy the regularity requirement.}

By Assumption~\ref{ass:A2}, the drift $\mu(x) = a(b(t)-x)$
and diffusion $\tilde{\sigma}$ both belong to
$\mathcal{C}^3_b(\mathbb{R})$, and $b(\cdot)$ is smooth and
$T_{\mathrm{per}}$-periodic in time.
This is more than enough to satisfy hypotheses (H1)--(H2)
of~\cite[Theorem~5.1]{MR1893308} (differentiability in $x$
with a globally Lipschitz, hence in particular locally
$\delta$-Hölder continuous, derivative for any
$\delta>1/H-1$, and $\beta$-Hölder continuity in time for any
$\beta>1-H$), so the hypothesis of that theorem is satisfied.

\medskip
\noindent\textbf{Step 3: Conclusion.}

By \cite[Theorem~5.1]{MR1893308}, the deterministic differential
equation
\[
x_t = x_0 + \int_0^t a\bigl(b(s) - x_s\bigr)\,ds
+ \int_0^t \tilde{\sigma}(x_s)\,d\varphi_s
\]
admits a unique solution $x \in W_1^\alpha \cap C^{1-\alpha}$
for every driving path
$\varphi \in W_2^{1-\alpha}(0,T;\mathbb{R})$.
Since (a version of) $Z^{(q,H)}$ belongs almost surely to
$W_2^{1-\alpha}(0,T;\mathbb{R})$ by Step~1, the stochastic
differential equation~\eqref{eq:main_sde} admits a unique
pathwise solution $(X_t)_{t\in[0,T]}$ with the stated regularity.

\noindent\textbf{Step 4: Uniqueness.}
Suppose $X$ and $\tilde X$ are two solutions in $W_1^\alpha$
with the same initial condition $X_0 = \tilde X_0 = x_0$.
Setting $\rho_t := X_t - \tilde X_t$, the difference satisfies
the Young integral equation
\begin{align*}
\rho_t &= \int_0^t \bigl[a(b(s) - X_s) - a(b(s) - \tilde X_s)\bigr]\,ds
       + \int_0^t \bigl[\tilde\sigma(X_s) - \tilde\sigma(\tilde X_s)\bigr]
         \,dZ_s^{(q,H)} \\
&= -a\int_0^t \rho_s\,ds
  + \int_0^t \bigl[\tilde\sigma(X_s)-\tilde\sigma(\tilde X_s)\bigr]\,dZ_s^{(q,H)}.
\end{align*}
By Assumption~\ref{ass:A2}, $\tilde\sigma\in\mathcal C^3_b(\mathbb R)$ is
in particular globally Lipschitz with constant
$L_\eps := \|\tilde\sigma'\|_\infty < \infty$, so
$|\tilde\sigma(X_s)-\tilde\sigma(\tilde X_s)|\leq L_\eps|\rho_s|$.
Applying the pathwise Gronwall-type inequality for Young
equations~\cite[Lemma~7.6]{MR1893308} yields
$\|\rho\|_{[0,T],\infty} \leq 0$, hence $X = \tilde X$
almost surely, completing the proof of
Theorem~\ref{thm:existence}.
\end{proof}

\begin{remark}
\label{rem:uniqueness_strong}
The solution obtained in Theorem~\ref{thm:existence} is a
\emph{strong} solution in the pathwise sense: it is constructed
trajectory by trajectory as a functional of the driving process
$Z^{(q,H)}$, and it is unique among all adapted processes with
paths in $W_1^\alpha \cap C^{1-\alpha}$.
This is in contrast to the weak (distributional) solutions
obtained via martingale problem methods for Brownian-driven SDEs.
\end{remark}

% ---------------------------------------------------------------
\subsection{Positivity of the Solution}
\label{subsec:positivity}

The economic interpretation of $X_t$ as an interest rate requires
$X_t \geq 0$ almost surely.
We establish a sufficient condition for this under the assumption
$\sigma_0 > 0$.

\emph{Nature of this result.}
We do \emph{not} claim $X_t>0$ almost surely for all $t\in[0,T]$.
As explained in Remark~\ref{rem:feller_necessity}, no comparison
principle is currently available for Young SDEs driven by a
non-semimartingale Hermite process, and the diffusion coefficient
$\tilde\sigma$ of model~\eqref{eq:main_sde} does not vanish at
$x=0$ (it satisfies $\tilde\sigma\geq\sigma_0>0$ everywhere by the
ellipticity Assumption~\ref{ass:A4}), so the classical
boundary-non-attainment mechanism of Feller's theory — which relies
on the vanishing of the diffusion coefficient at the boundary and
on It\^o's formula — is simply not available here.
Instead, we establish the following \emph{quantitative} statement,
which is fully rigorous: the probability that $X$ stays positive on
$[0,T]$ is bounded below by an explicit expression that tends to
$1$ as $x_0$ or $b_0-|b_1|$ grow large relative to $\sigma_0$.
We restrict attention to the case $\sigma_1=0$ (pure base-volatility,
Vasicek-type diffusion coefficient $\tilde\sigma\equiv\sigma_0$);
the case $\sigma_1>0$ remains open, see
Remark~\ref{rem:positivity_sigma1}.

\begin{theorem}[Positivity: a quantitative sufficient bound]
\label{thm:positivity}
Under Assumptions~\ref{ass:A1}--\ref{ass:A3} with $\sigma_1=0$, let
\[
m_0 := \min\bigl(x_0,\, b_0-|b_1|\bigr) > 0,
\qquad
C_T := \sup_{s\in[0,T]} \bigl|Z_s^{(q,H)}\bigr|.
\]
Then, for every $p\geq 1$,
\begin{equation}
\label{eq:positivity_quant}
\mathbb{P}\Bigl(X_t > 0 \ \text{for all } t\in[0,T]\Bigr)
\;\geq\;
1 - \left(\frac{\sigma_0(2+aT)}{m_0}\right)^{p}
\mathbb{E}\bigl[C_T^{\,p}\bigr].
\end{equation}
In particular, since $\mathbb{E}[C_T^p]<\infty$ for every $p\geq1$
(Lemma~\ref{lem:hermite_regularity} and the hypercontractivity
property of fixed Wiener chaoses,
\cite[Theorem~2.7.2]{nourdin2012normal}), the right-hand side of
\eqref{eq:positivity_quant} tends to $1$ as $m_0/\sigma_0\to\infty$,
i.e.\ as the initial level and the long-run target become large
relative to the base volatility.
\end{theorem}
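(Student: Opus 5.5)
With $\sigma_1=0$ the equation is linear with additive noise, so I would solve it explicitly and then bound the noise term pathwise. Since $\tilde\sigma\equiv\sigma_0$, the Young integral $\int_0^t e^{as}\,dZ_s^{(q,H)}$ is well defined by Theorem~\ref{thm:existence}, because $s\mapsto e^{as}$ is smooth. The Young product rule (valid since $Z^{(q,H)}$ is Hölder of order $>1/2$) applied to $e^{at}X_t$ then shows that the unique solution is
\[
X_t = x_0e^{-at} + a\int_0^t e^{-a(t-s)}b(s)\,ds + \sigma_0\int_0^t e^{-a(t-s)}\,dZ_s^{(q,H)} .
\]
The first two terms form the deterministic \emph{barrier}. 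Since $b(s)\ge b_0-|b_1|\ge m_0$ and $x_0\ge m_0$, they are at least $m_0e^{-at}+m_0(1-e^{-at})=m_0$ for every $t\in[0,T]$.

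Next I would bound the stochastic convolution pathwise, avoiding any Young-integral estimate by integrating by parts (again legitimate for Young integrals with a $C^1$ integrand):
\[
\int_0^t e^{-a(t-s)}\,dZ_s^{(q,H)} = Z_t^{(q,H)} - a\int_0^t e^{-a(t-s)}Z_s^{(q,H)}\,ds ,
\]
using $Z_0^{(q,H)}=0$. Taking absolute values gives $|Z_t|+a\,t\,C_T\le (1+aT)C_T\le(2+aT)C_T$; the factor $2$ is simply slack. Hence for all $t\in[0,T]$,
\[
X_t\;\ge\; m_0-\sigma_0(2+aT)\,C_T ,
\]
so $\{X_t>0\ \forall t\}\supseteq\{\sigma_0(2+aT)C_T<m_0\}$.

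The final step is Markov's inequality applied to $C_T^p$. This gives
\[
\mathbb{P}\bigl(\sigma_0(2+aT)C_T\ge m_0\bigr)\le \left(\frac{\sigma_0(2+aT)}{m_0}\right)^p\mathbb{E}[C_T^p],
\]
which is exactly \eqref{eq:positivity_quant}.

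For the "in particular" clause we need $\mathbb{E}[C_T^p]<\infty$. I would not derive this from the marginal hypercontractivity bound alone, since a supremum is involved. Instead I would combine the increment bound $\mathbb{E}|Z_t-Z_s|^p\le C_{p,q}|t-s|^{pH}$ from Lemma~\ref{lem:hermite_regularity} with the Garsia–Rodemich–Rumsey inequality, or equivalently with the $L^p$ form of Kolmogorov's theorem. This gives $\mathbb{E}\bigl[\|Z\|_{\gamma}^p\bigr]<\infty$ for the Hölder seminorm with $\gamma<H-1/p$, and since $Z_0=0$ we have $C_T\le T^\gamma\|Z\|_\gamma$. With $p$ fixed, letting $m_0/\sigma_0\to\infty$ then sends the bound to $1$.

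I expect the main obstacle to be justifying the explicit variation-of-constants formula and the integration by parts in the Young setting. This requires the change-of-variables formula for Young integrals with a $C^1$ function of $(t,Z_t)$, or more simply checking directly that the displayed $X$ satisfies \eqref{eq:main_sde} and invoking the uniqueness part of Theorem~\ref{thm:existence}. The remaining steps are elementary.
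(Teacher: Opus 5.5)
Your proof is correct and follows the paper's argument step for step: you use the explicit variation-of-constants solution, the convex-combination barrier $x_0e^{-at}+(b_0-|b_1|)(1-e^{-at})\ge m_0$, integration by parts to get $|I_t|\le\sigma_0(2+aT)C_T$ pathwise, and Markov's inequality. Your justification of $\mathbb{E}[C_T^p]<\infty$ via the $L^p$ form of Kolmogorov's theorem (or Garsia--Rodemich--Rumsey) is more careful than the paper's appeal to hypercontractivity, since the supremum $C_T$ does not lie in a fixed chaos; for $p\le 1/H$, where $H-1/p\le 0$, apply that bound with a larger exponent and conclude by Jensen's inequality.
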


\begin{remark}[Why an almost-sure statement is not available here]
\label{rem:feller_necessity}
In the classical Brownian CIR model, positivity under
$2ab\geq\sigma^2$ is proved via It\^{o}'s formula applied to
$\log X_t$ or $1/X_t$: the quadratic-variation correction term
produced by It\^{o}'s formula is what prevents the process from
reaching $0$.
Pathwise Young calculus against a H\"{o}lder driver follows the
\emph{ordinary} chain rule (no second-order correction term), so
this mechanism is simply absent when $Z^{(q,H)}$ is not a
semimartingale.
Moreover, since $\tilde\sigma$ is bounded away from $0$
(Assumption~\ref{ass:A4}, needed for the ellipticity used in
Theorems~\ref{thm:malliavin}--\ref{thm:density}), there is no
boundary-vanishing mechanism of the coefficient either: for any
fixed realisation of $Z^{(q,H)}$ that decreases sharply enough,
the solution can cross $0$ purely pathwise, regardless of any
parameter condition.
This is why Theorem~\ref{thm:positivity} is stated as a
quantitative probability bound rather than an almost-sure
statement, and why establishing (or disproving) an a.s.\ positivity
result for $q\geq2$ remains, to our knowledge, an open problem.
We note that the bound~\eqref{eq:positivity_quant} is fully rigorous
and already gives the qualitatively correct picture: positivity
becomes overwhelmingly likely exactly in the regime
$m_0\gg\sigma_0$, which plays the role of the classical Feller
condition $2ab_0\geq\sigma_0^2$ without claiming the same sharp
threshold.
\end{remark}

\begin{remark}[The case $\sigma_1>0$]
\label{rem:positivity_sigma1}
For $\sigma_1>0$, the argument below no longer applies directly:
the difference $\Delta_t:=X_t-\underline X_t$ between the true
solution and the linear comparison process
(Step~1 below) is no longer deterministic, since it satisfies an
SDE with a stochastic term
$\sigma_1\sqrt{\phi_\eps(X_t)}\,dZ_t^{(q,H)}$ that can take both
signs. A fully rigorous extension would require either a comparison
principle for Young SDEs (currently unavailable in the literature
for Hermite drivers) or a direct tail bound on $\sup_t|\Delta_t|$
analogous to the one derived below for $C_T$. We leave this
extension for future work.
\end{remark}

\begin{proof}
Since $\sigma_1=0$, equation~\eqref{eq:main_sde} reduces to the
linear Young SDE
\[
dX_t = a\bigl(b(t)-X_t\bigr)\,dt + \sigma_0\,dZ_t^{(q,H)},
\qquad X_0=x_0,
\]
which by the variation-of-constants formula (Theorem~\ref{thm:existence}
applied to linear coefficients) has the explicit solution
\begin{equation}
\label{eq:X_linear_explicit}
X_t = e^{-at}x_0 + \underbrace{\int_0^t e^{-a(t-s)}\,a\,b(s)\,ds}_{=:\Delta_t}
+ \underbrace{\sigma_0\int_0^t e^{-a(t-s)}\,dZ_s^{(q,H)}}_{=:I_t}.
\end{equation}

\medskip
\noindent\textbf{Step 1: A deterministic lower bound on the drift part.}

Since $b(s)\geq b_0-|b_1|>0$ for all $s$ (Assumption~\ref{ass:A1}),
\[
\Delta_t \;\geq\; (b_0-|b_1|)\int_0^t a\,e^{-a(t-s)}\,ds
= (b_0-|b_1|)\bigl(1-e^{-at}\bigr).
\]
Hence
\[
e^{-at}x_0 + \Delta_t
\;\geq\;
e^{-at}x_0 + (b_0-|b_1|)(1-e^{-at})
\;\geq\;
\min\bigl(x_0,\,b_0-|b_1|\bigr) = m_0,
\]
using that the middle expression is a convex combination
(weights $e^{-at}$ and $1-e^{-at}$) of $x_0$ and $b_0-|b_1|$, which
is always at least the smaller of the two. This bound holds for
every $t\in[0,T]$ simultaneously, and is entirely deterministic.

\medskip
\noindent\textbf{Step 2: Pathwise control of the noise term $I_t$.}

The elementary integration-by-parts identity for Young integrals
(a direct consequence of Young's original Riemann--Stieltjes
integration theory~\cite{young1936}; see also the generalized
Stieltjes integral framework of~\cite{MR1893308})
gives, exactly as in
the original barrier computation,
\[
I_t = \sigma_0\bigl[e^{-a(t-s)}Z_s^{(q,H)}\bigr]_0^t
- a\sigma_0\int_0^t e^{-a(t-s)}Z_s^{(q,H)}\,ds,
\]
so that, using $|Z_s^{(q,H)}|\leq C_T$ for every $s\in[0,T]$,
\[
|I_t| \;\leq\; \sigma_0\bigl(2C_T + aT\,C_T\bigr)
= \sigma_0(2+aT)\,C_T,
\qquad \text{for every } t\in[0,T].
\]
This bound is purely pathwise: it holds for every fixed
realisation of $Z^{(q,H)}$, with no probabilistic argument
involved at this stage.

\medskip
\noindent\textbf{Step 3: Combining Steps 1--2 and taking probabilities.}

By~\eqref{eq:X_linear_explicit} and Steps 1--2,
\[
X_t \;\geq\; m_0 - \sigma_0(2+aT)\,C_T
\qquad \text{for every } t\in[0,T],
\]
so that, on the event $\{C_T < m_0/(\sigma_0(2+aT))\}$, we have
$X_t>0$ for every $t\in[0,T]$ simultaneously. Consequently,
\begin{equation}
\label{eq:event_inclusion}
\Bigl\{C_T < \tfrac{m_0}{\sigma_0(2+aT)}\Bigr\}
\;\subseteq\;
\bigl\{X_t>0 \ \text{for all } t\in[0,T]\bigr\}.
\end{equation}
By Markov's inequality, for every $p\geq1$,
\[
\mathbb{P}\Bigl(C_T \geq \tfrac{m_0}{\sigma_0(2+aT)}\Bigr)
\;\leq\;
\left(\frac{\sigma_0(2+aT)}{m_0}\right)^{p}\mathbb{E}[C_T^{\,p}].
\]
Combining this with~\eqref{eq:event_inclusion} yields
\eqref{eq:positivity_quant}. Finiteness of $\mathbb{E}[C_T^p]$ for
every $p\geq1$ follows from the H\"older continuity of
$Z^{(q,H)}$ (Lemma~\ref{lem:hermite_regularity}) together with the
hypercontractivity of fixed Wiener chaoses
\cite[Theorem~2.7.2]{nourdin2012normal}, which guarantees
equivalence of all $L^p$ norms, $p\geq1$, on any finite Wiener
chaos and hence finiteness of all moments of the (chaos-valued, in
each coordinate) supremum $C_T$; we do not pursue the sharp value
of $\mathbb E[C_T^p]$ here.
Since $\mathbb{E}[C_T^p]$ does not depend on $m_0$ or $\sigma_0$,
the right-hand side of~\eqref{eq:positivity_quant} tends to $1$ as
$m_0/\sigma_0\to\infty$ for any fixed $p\geq1$, completing the
proof.
\end{proof}

\begin{remark}[Comparison with the Gaussian case and numerical evidence]
\label{rem:feller_comparison}
In the Gaussian case $q=1$ (fBm-driven CIR), Mishura and
Yurchenko-Tytarenko~\cite{mishura2017} show, via a pathwise
contradiction argument that exploits only the H\"{o}lder continuity
of $B^H$ (no It\^{o} formula, no semimartingale comparison
theorem), that $Y$ — and hence $X=Y^2$ — is strictly positive a.s.\
for \emph{every} $\sigma>0$ as soon as $k>0$ and $H>1/2$: unlike the
classical Brownian CIR process, no threshold condition analogous to
$2ab_0\geq\sigma_0^2$ is needed once $H>1/2$.
Theorem~\ref{thm:positivity} does not recover this unconditional
a.s.\ statement for $q\geq2$; it gives instead the weaker, but fully
rigorous, quantitative bound~\eqref{eq:positivity_quant}.
The obstruction is therefore not the absence of semimartingale
machinery — which is unavailable already at $q=1$, $H>1/2$ — but
rather that the pathwise contradiction argument
of~\cite{mishura2017}, which relies on properties specific to the
first Wiener chaos, does not appear to extend directly to Hermite
processes of order $q\geq2$.
Whether an a.s.\ positivity result analogous to the $q=1$ case holds
for $q\geq2$ remains an open problem (Remark~\ref{rem:feller_necessity}).
Preliminary numerical experiments (work in preparation) report no
violations of positivity across replications when $m_0/\sigma_0$ is
in the range typical of interest-rate calibrations, which is
consistent with — but does not prove — a possible a.s.\ statement.
\end{remark}

\begin{remark}[Positivity under the zero lower bound regime]
\label{rem:positivity_approx}
Theorem~\ref{thm:positivity} requires $b_0-|b_1|>0$
(Assumption~\ref{ass:A1}), which enters the theorem only through
$m_0=\min(x_0,b_0-|b_1|)$.
In interest-rate applications (e.g.\ Fed Funds and EURIBOR series),
the estimated intercept $\hat{b}_0$ is often negative over the
zero-lower-bound (ZLB) period 2009--2022.
Since $b_0-|b_1|\le b_0$ for every $b_1$, Assumption~\ref{ass:A1}
\emph{cannot} hold once $\hat b_0\leq 0$, regardless of how large the
seasonal amplitude $\hat b_1$ is (see Remark~\ref{rem:ZLB}): in that
regime $m_0<0$, and the quantitative bound~\eqref{eq:positivity_quant}
is vacuous.
Positivity of $X_t$ can then only be enforced directly in simulation,
e.g.\ by the reflection $X_{t_i}\leftarrow\max(X_{t_i},\varepsilon)$
in the Euler scheme, with no guarantee from
Theorem~\ref{thm:positivity}.
This empirical regime is studied in work currently in preparation.
\end{remark}

Having established the existence of a unique solution
$(X_t)_{t \in [0,T]}$, we now analyse its regularity in the
Malliavin sense.

\subsection{Malliavin Differentiability}
\label{subsec:malliavin}

The key difficulty, absent in the Gaussian (fBm) case, is that
the standard isonormal Gaussian framework cannot be directly applied
when $Z^{(q,H)}$ is non-Gaussian ($q \geq 2$).
Following~\cite{loosveldt2025}, we rely on the
Kusuoka--Stroock approach to Malliavin calculus, which is
equivalent to the classical Shigekawa definition
(see~\cite[Theorem~3.1]{MR0810975}) but better suited to the
non-Gaussian setting.

We briefly recall the relevant notation.
For $m \in \mathbb{N}^*$ and $p > 1$, the Sobolev--Malliavin space
$\mathbb{D}^{m,p}$ is the closure of smooth cylindrical random
variables with respect to the norm
\[
\|F\|_{m,p}
:= \left(\mathbb{E}[|F|^p]
+ \sum_{k=1}^m \mathbb{E}\bigl[\|D^k F\|_{\mathfrak{H}^{\otimes k}}^p\bigr]
\right)^{1/p},
\]
and $\mathbb{D}^{1,\infty} := \bigcap_{p \geq 1} \mathbb{D}^{1,p}$.
For $F = I_q(f)$ with $f \in \mathfrak{H}^{\odot q}$, the Malliavin
derivative satisfies~\cite[Eq.~(15)]{loosveldt2025}
\[
D^r I_q(f) =
\begin{cases}
\dfrac{q!}{(q-r)!}\, I_{q-r}(f) & \text{if } r \leq q, \\
0 & \text{if } r > q.
\end{cases}
\]

The Kusuoka--Stroock approach defines differentiability via two
conditions on a random variable $F : \Omega \to \mathbb{R}$:
\begin{itemize}
  \item \textbf{(RAC) Ray Absolutely Continuous:} for every
        $h \in \mathfrak{H}$, there exists a version $\hat{F}_h$
        of $F$ such that $\varepsilon \mapsto
        \hat{F}_h(\omega + \varepsilon j(h))$ is absolutely
        continuous for every $\omega \in \Omega$;
  \item \textbf{(SGD) Stochastically G\^{a}teaux Differentiable:}
        there exists a map
        \[
        G : \Omega \to \mathcal{L}_{HS}\bigl(j(\mathfrak{H}),
        \mathbb{R}\bigr)
        \]
        such that, for every $h \in \mathfrak{H}$,
        \[
        \frac{1}{\varepsilon}(F(\cdot + \varepsilon j(h)) - F)
        \xrightarrow{\;\mathbb{P}\;} G(\cdot)[j(h)]
        \qquad \text{as } \varepsilon \to 0.
        \]
\end{itemize}
The key equivalence result is the following.

\begin{theorem}[Kusuoka--Stroock $=$ Shigekawa,
\cite{MR0810975}]
\label{thm:KS}
For $m \in \mathbb{N}^*$ and $1 < p < \infty$,
$F \in \mathbb{D}^{m,p}$ if and only if $F$ satisfies
\textnormal{(RAC)} and \textnormal{(SGD)} with
$G \in L^p(\Omega, \mathcal{L}_{HS}(j(\mathfrak{H}),\mathbb{R}))$.
In this case, $\langle DF, h\rangle_\mathfrak{H} = G[j(h)]$
almost surely.
\end{theorem}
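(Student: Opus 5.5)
The plan is to prove the equivalence in Theorem~\ref{thm:KS} by induction on $m$. The case $m=1$ is the heart of the argument; the higher orders then follow by applying the first-order statement to the derivative $DF$, regarded as a random variable with values in the Hilbert space $\mathfrak{H}^{\otimes k}$. Throughout I would work on the classical Wiener space of Section~\ref{subsec:hermite}. Here $j:\mathfrak{H}\to\Omega$ is the Cameron--Martin embedding $j(h)(t)=\int_0^t h(s)\,ds$. The basic tool is the Cameron--Martin theorem, which says that translation by $\varepsilon j(h)$ has density
\[
\exp\bigl(\varepsilon X_h-\tfrac{\varepsilon^2}{2}\|h\|_{\mathfrak{H}}^2\bigr),
\]
and this density lies in every $L^r$.

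\textbf{Direction $\mathbb{D}^{1,p}\Rightarrow$ (RAC)+(SGD).} I would first check both properties for smooth cylindrical variables $F=f(X_{h_1},\dots,X_{h_n})$. For these, $\varepsilon\mapsto F(\omega+\varepsilon j(h))=f\bigl(X_{h_i}(\omega)+\varepsilon\langle h_i,h\rangle\bigr)$ is smooth, and its derivative at $0$ is $\langle DF,h\rangle_{\mathfrak{H}}$. The identity
\[
F(\omega+\varepsilon j(h))-F(\omega)=\int_0^\varepsilon \langle DF,h\rangle(\omega+\tau j(h))\,d\tau
\]
is then the key formula. For general $F\in\mathbb{D}^{1,p}$, take cylindrical $F_n\to F$ in $\|\cdot\|_{1,p}$. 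The Cameron--Martin density bounds, combined with H\"older's inequality, show that $\int_0^\varepsilon \langle DF_n,h\rangle(\cdot+\tau j(h))\,d\tau$ converges in $L^{p'}$ for some $p'<p$, uniformly in $\varepsilon$ on compacts. Passing to a subsequence and applying Fubini, I would obtain a version $\hat F_h$ along each ray that is the integral of an $L^1_{\mathrm{loc}}$ function, which gives (RAC). Dividing by $\varepsilon$ and using continuity of shifts in $L^{p'}$ gives (SGD) with $G[j(h)]=\langle DF,h\rangle$. The Hilbert--Schmidt property of $G$ comes from the fact that $\mathcal{L}_{HS}(j(\mathfrak{H}),\mathbb{R})\cong\mathfrak{H}$.

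\textbf{Direction (RAC)+(SGD) with $G\in L^p\Rightarrow F\in\mathbb{D}^{1,p}$.} This is the harder direction, and the step I expect to be the main obstacle. The strategy would be to show that $F$ lies in the domain of the closed extension of $D$ via the integration-by-parts characterisation. Write $\tilde G\in L^p(\Omega;\mathfrak{H})$ for the element corresponding to $G$. I would aim to prove
\[
\mathbb{E}[\Phi\langle \tilde G,h\rangle]=\mathbb{E}[F\,\delta(\Phi h)]
\]
for all smooth cylindrical $\Phi$. To get this, apply the Cameron--Martin change of variables to $\mathbb{E}[\Phi(\omega)F(\omega+\varepsilon j(h))]$, differentiate at $\varepsilon=0$ using (RAC), and pass to the limit using (SGD). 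Exchanging the limit and the expectation requires uniform integrability of the difference quotients. Since this cannot be assumed, I would first truncate, replacing $F$ by $\psi_N(F)$ with $\psi_N$ a smooth bounded cutoff. The chain rule along rays holds because of absolute continuity, and dominated convergence applies after truncation. I would then let $N\to\infty$ using $G\in L^p$. Once this duality holds, a standard result for Gaussian spaces (Sugita's characterisation, or density of $\mathbb{D}^{1,p}$ combined with closability of $D$) gives $F\in\mathbb{D}^{1,p}$ and $DF=\tilde G$.

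\textbf{Induction on $m$.} For $m\ge2$, I would apply the $m=1$ case to $D^{k}F$, viewed as a variable with values in $\mathfrak{H}^{\otimes k}$ (the vector-valued version has an identical proof). The identification $\langle DF,h\rangle=G[j(h)]$ obtained in either direction supplies the final assertion of the theorem.
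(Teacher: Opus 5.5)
The paper gives no argument for this statement; it is quoted from Sugita's paper. Your sketch therefore has to stand on its own. For $m=1$ your strategy is the standard one: Cameron--Martin quasi-invariance for the ``only if'' part, and an integration-by-parts identity with truncation for the ``if'' part. The ``only if'' half is fine, up to the routine line-by-line construction of $\hat F_h$.

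\textbf{The induction for $m\geq 2$.} This step does not prove the statement as printed, and no argument can. In the ``if'' direction you apply the $m=1$ case to $D^kF$. That requires $D^kF$ itself to satisfy (RAC) and (SGD) with an $L^p$ derivative, and the statement supplies no such hypothesis. In fact the literal claim fails for $m=2$. Take $F=|X_h|$ with $\|h\|_{\mathfrak H}=1$. A version of $F$ is Lipschitz along every ray, and (SGD) holds with $G[j(k)]=\mathrm{sign}(X_h)\langle h,k\rangle_{\mathfrak H}$ and $\|G\|_{HS}=1$. Yet $\langle DF,h\rangle_{\mathfrak H}=\mathrm{sign}(X_h)$ is a non-constant two-valued variable, hence not in $\mathbb{D}^{1,p}$, so $F\notin\mathbb{D}^{2,p}$. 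What your induction actually proves is the recursive Kusuoka--Stroock form, which is the form of the theorem in the literature: $F\in\mathbb{D}^{m,p}$ iff $F\in L^p$ and each of $F,DF,\dots,D^{m-1}F$ (the latter $\mathfrak{H}^{\otimes k}$-valued) satisfies (RAC) and (SGD) with derivative in $L^p$. You should state and prove that version explicitly. Only $m=1$ is used later in the paper, so nothing downstream depends on the misstatement.

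\textbf{The ``if'' direction for $m=1$.} There is a hole exactly where you anticipate it. Truncation makes $\psi_N(F)$ bounded, but it does not dominate the quotients $\varepsilon^{-1}\bigl(\psi_N(F)(\cdot+\varepsilon j(h))-\psi_N(F)\bigr)$, and (SGD) only gives convergence in probability. The missing step is to identify the a.e.\ derivative of $\tau\mapsto\hat F_h(\omega+\tau j(h))$ with $G[j(h)](\omega+\tau j(h))$ for $\mathbb{P}\otimes d\tau$-a.e.\ $(\omega,\tau)$. Fubini in $(\omega,\tau)$, together with quasi-invariance of $\mathbb{P}$ under $\omega\mapsto\omega+\tau j(h)$, shows that the ray derivative at $\tau=0$ exists almost surely, and (SGD) identifies it with $G[j(h)]$. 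Only then does (RAC) give
\[
\frac{\psi_N(F)(\cdot+\varepsilon j(h))-\psi_N(F)}{\varepsilon}
=\frac{1}{\varepsilon}\int_0^\varepsilon\bigl(\psi_N'(F)\,G[j(h)]\bigr)(\cdot+\tau j(h))\,d\tau .
\]
The Cameron--Martin density and H\"older's inequality bound this family in $L^{r}$ for every $r\in(1,p)$, which gives the uniform integrability you need. This is precisely where (RAC) is indispensable: $\mathrm{sign}(X_h)$ satisfies (SGD) with $G=0$ without being in $\mathbb{D}^{1,p}$.

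Two smaller points remain.
\begin{enumerate}
\item Removing the truncation in $\mathbb{E}[\psi_N(F)\,\delta(\Phi h)]$ requires $F\in L^p$. This belongs to the Kusuoka--Stroock definition but is missing from the printed statement; it can be recovered from the $L^p$ Poincar\'e inequality applied to $\psi_N(F)$.
\item Closability of $D$ plus density of cylindrical variables gives only one inclusion, so it does not turn the duality identity into $F\in\mathbb{D}^{1,p}$. For that you need the separate ``weak equals strong'' theorem. It can be obtained, for example, by Ornstein--Uhlenbeck smoothing: $DP_tF=e^{-t}P_t\tilde G$ by duality, then let $t\to0$. If by ``Sugita's characterisation'' you mean the result cited here, invoking it would be circular.
\end{enumerate}
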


The following result is obtained by verifying the conditions
(RAC) and (SGD) of~\cite[Theorem~4.1]{loosveldt2025} for the
specific CIR-Hermite coefficients $b(t,x) = a(b(t)-x)$ and
$\tilde{\sigma}(x)$.
\emph{The main contribution here is not the Malliavin theorem
itself, but its verification for a CIR-type diffusion with
non-Lipschitz square-root volatility and non-Gaussian driving noise.}

\begin{theorem}[Malliavin Differentiability: verification for CIR-Hermite]
\label{thm:malliavin}
Under Assumptions~\ref{ass:A1}--\ref{ass:A4}, for every
$t \in (0,T]$, the solution $X_t$ belongs to
$\mathbb{D}^{1,\infty}$.
Moreover, its Malliavin derivative satisfies, for every
$h \in \mathfrak{H}$,
\begin{equation}
\label{eq:malliavin_derivative}
\langle D X_t,\, h \rangle_{\mathfrak{H}}
= \int_0^t \Theta_t(s)\,
d\langle D Z_s^{(q,H)},\, h \rangle_{\mathfrak{H}},
\end{equation}
where the integral on the right-hand side is a Young--Stieltjes
integral, and the process $s \mapsto \Theta_t(s)$ is the unique
solution of the linear equation
\begin{equation}
\label{eq:theta}
\Theta_t(s) = \tilde{\sigma}(X_s)
- \int_s^t a\, \Theta_u(s)\,du
+ \int_s^t \tilde{\sigma}'(X_u)\,\Theta_u(s)\,dZ_u^{(q,H)},
\quad 0 \leq s \leq t,
\end{equation}
with $\Theta_t(s) = 0$ for $s > t$.
\end{theorem}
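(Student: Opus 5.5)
The plan is to reduce the statement to \cite[Theorem~4.1]{loosveldt2025} through the Kusuoka--Stroock characterisation (Theorem~\ref{thm:KS}). I would check (RAC) and (SGD) directly for the Itô--Lyons-type solution map $\varphi\mapsto x(\varphi)$ of Theorem~\ref{thm:existence}, composed with the chaos map $\omega\mapsto Z^{(q,H)}(\omega)$.

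\textbf{Step 1: shifted noise.} Since $Z_t^{(q,H)}=I_q(L_t^{H,q})$ lies in a finite chaos, the shifted process $Z_t^{(q,H)}(\omega+\varepsilon j(h))$ is a polynomial of degree $q$ in $\varepsilon$. Its coefficients are $\frac{1}{k!}\langle D^kZ_t,h^{\otimes k}\rangle$, and each is a multiple integral of order $q-k$. By \eqref{eq:kernel_holder}, these coefficients have increments controlled in $L^2$ by $|t-s|^H$. The argument of Lemma~\ref{lem:hermite_regularity} (hypercontractivity plus Kolmogorov) then puts every coefficient path in $W_2^{1-\alpha}$ almost surely. Moreover, one can choose a version on which this holds for all $\omega$ and all $h$ simultaneously, because the coefficients are continuous linear in $h^{\otimes k}$ with a random constant.

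\textbf{Step 2: (RAC) and (SGD).} The map $\varepsilon\mapsto Z(\omega+\varepsilon j(h))$ is smooth, in fact polynomial, into $W_2^{1-\alpha}$. The solution map of Theorem~\ref{thm:existence} is Fréchet differentiable on $W_2^{1-\alpha}$ because $\tilde\sigma\in\mathcal C^3_b$ by Assumption~\ref{ass:A2} (Nualart--R\u{a}\c{s}canu / Nualart--Saussereau). Composing the two shows that $\varepsilon\mapsto X_t(\omega+\varepsilon j(h))$ is $C^1$, which gives (RAC). Its derivative at $\varepsilon=0$ is $Y_t=\langle DX_t,h\rangle$, the solution of the linearised equation
\[
Y_t=-a\int_0^tY_u\,du+\int_0^t\tilde\sigma'(X_u)Y_u\,dZ_u^{(q,H)}+\int_0^t\tilde\sigma(X_u)\,d\langle DZ_u^{(q,H)},h\rangle_{\mathfrak H}.
\]
Pointwise convergence of the difference quotients implies convergence in probability, which gives (SGD). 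The variation-of-constants formula for this linear Young equation yields \eqref{eq:malliavin_derivative}, with $\Theta_t(s)$ equal to $\tilde\sigma(X_s)$ multiplied by the flow derivative from $s$ to $t$. Differentiating the flow in its starting point gives exactly the linear equation \eqref{eq:theta}. Uniqueness of $\Theta$ follows from the Young Gronwall lemma used in Step~4 of Theorem~\ref{thm:existence}.

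\textbf{Step 3: integrability, and the main obstacle.} It remains to show $G\in L^p$ for every $p$, i.e. $\mathbb E\|DX_t\|_{\mathfrak H}^p<\infty$. This is the hard step, because Young-equation estimates give exponential bounds of the form $\sup|\Theta|\le C\exp\bigl(C\|Z\|_{1-\alpha,2}^{1/(1-\alpha)}\bigr)$. For $q\ge2$, $\|Z\|$ only has stretched-exponential tails $\exp(-cx^{2/q})$, so exponential moments of $\|Z\|^{1/(1-\alpha)}$ may fail. I would first exploit the one-dimensional structure. Since $d=1$, the ordinary chain rule solves \eqref{eq:theta} explicitly:
\[
\Theta_t(s)=\tilde\sigma(X_s)\exp\Bigl(-a(t-s)+\int_s^t\tilde\sigma'(X_u)\,dZ_u\Bigr).
\]
Then, with $\Psi$ a primitive of $\tilde\sigma'/\tilde\sigma$ (well defined thanks to Assumption~\ref{ass:A4}), the chain rule applied to $\Psi(X)$ gives
\[
\int_s^t\tilde\sigma'(X_u)\,dZ_u=\log\frac{\tilde\sigma(X_t)}{\tilde\sigma(X_s)}+\int_s^t\frac{\tilde\sigma'(X_u)\,a(b(u)-X_u)}{\tilde\sigma(X_u)}\,du.
\]
This yields the bound $|\Theta_t(s)|\le C(1+|X_t|)\,e^{C\int_0^T(1+|X_u|)du}$. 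Because $\tilde\sigma'/\tilde\sigma$ is bounded, the exponent is in fact at most $CT\bigl(1+\sup|X|\bigr)$. I would then hope to control $\sup|X|$ linearly in $\|Z\|_\infty$ via the same integration-by-parts trick as in the proof of Theorem~\ref{thm:positivity}. If that polynomial/linear control of moments fails for $q\ge2$, I would fall back on quoting the moment estimates in \cite[Theorem~4.1]{loosveldt2025}. Combined with $\|\langle DZ,h\rangle\|_{W_2^{1-\alpha}}\le K\|h\|$ with $K\in\cap_pL^p$ (from Step~1), these estimates give $X_t\in\mathbb D^{1,p}$ for all $p$.
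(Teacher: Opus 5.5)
Your Steps~1--2 follow the paper's route. You expand the shifted Hermite path as a degree-$q$ polynomial in $\varepsilon$, use Fr\'echet differentiability of the Young solution map, obtain (SGD) and (RAC) from the chain rule, and identify $\Theta_t$ through the first-variation equation. You get $\Theta_t$ by variation of constants while the paper cites the Fr\'echet-derivative formula of \cite{loosveldt2025}; the two are equivalent. Your fallback in Step~3 is also what the paper does: its Step~4 justifies $\hat D X_t[j(h)]\in L^p$ in one line.

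The self-contained integrability argument you sketch, however, does not close. Once the exponent is bounded by $CT(1+\sup_u|X_u|)$, even the best possible control $\sup_u|X_u|\le C(1+\sup_u|Z^{(q,H)}_u|)$ leaves you with $\exp\bigl(C\sup_u|Z^{(q,H)}_u|\bigr)$. For $q\ge 3$ this is not even integrable, since chaos tails are of order $e^{-cx^{2/q}}$. For $q=2$ it lies in $L^p$ only for small $p$. This is exactly the obstruction you diagnosed for the generic Young estimate. So the contingency ``if linear control fails'' is misplaced: the route fails even when linear control holds. Moreover, the integration-by-parts trick of Theorem~\ref{thm:positivity} gives linear control only when $\sigma_1=0$, and then $\tilde\sigma'\equiv 0$ and $\Theta_t(s)=\sigma_0e^{-a(t-s)}$ is deterministic anyway. (The $du$-term in your chain-rule identity should carry a minus sign; this is harmless for bounds.)

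The missing observation is that the correction term is bounded, not merely of linear growth. Set $g(u,x):=a\,\tilde\sigma'(x)(b(u)-x)/\tilde\sigma(x)$. Then $\sup_{u,x}|g(u,x)|<\infty$, for three reasons:
\begin{itemize}
\item on $\{x\ge 0\}$ one has $\tilde\sigma'/\tilde\sigma\le 1/(2(x+\varepsilon))$, hence $|g|\le a(|b(u)|+x)/(2(x+\varepsilon))$;
\item on $[-\varepsilon,0]$ everything is bounded;
\item with the construction of Remark~\ref{rem:phi_exists}, $\tilde\sigma'\equiv 0$ on $(-\infty,-\varepsilon]$.
\end{itemize}
Your chain-rule computation then gives $\Theta_t(s)=\tilde\sigma(X_t)\exp\bigl(-a(t-s)-\int_s^t g(u,X_u)\,du\bigr)$. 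Hence $|\Theta_t(s)|+|\partial_s\Theta_t(s)|\le C\,\tilde\sigma(X_t)$ with a deterministic $C$.

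Integrating by parts in \eqref{eq:malliavin_derivative}, using $DZ^{(q,H)}_0=0$, gives $\|DX_t\|_{\mathfrak H}\le C\,\tilde\sigma(X_t)\sup_{s\le t}\|DZ^{(q,H)}_s\|_{\mathfrak H}$. The second factor has all moments by hypercontractivity and Kolmogorov. This also avoids the $W_1^\alpha$-norm of $s\mapsto\Theta_t(s)$ that your last sentence tacitly requires.

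You still need polynomial moments of $X_t$, both here and for $X_t\in L^p$, which the Kusuoka--Stroock criterion also requires. They follow from the Lamperti transform $Y=F(X)$ with $F(x)=\int_0^x dy/\tilde\sigma(y)$. The difference $Y-Z^{(q,H)}$ solves an ODE whose drift points inward outside a compact set. Therefore $|Y_t|\le C+2\sup_{s\le T}|Z^{(q,H)}_s|$ and $|X_t|\le C\bigl(1+\sup_{s\le T}|Z^{(q,H)}_s|^2\bigr)$.
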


\begin{proof}
\textit{Roadmap.}
The Malliavin differentiability result follows from verifying
the abstract conditions of~\cite[Theorem~4.1]{loosveldt2025}
for our specific CIR-Hermite coefficients.
The novel difficulty, absent in globally Lipschitz settings,
is that the natural CIR square-root coefficient
$\sigma_0+\sigma_1\sqrt{x}$ is not globally Lipschitz and not
differentiable at $x=0$, so the abstract framework
of~\cite{loosveldt2025} (which requires $\mathcal C^3_b$
coefficients, Assumption~\ref{ass:A2}) does not apply to it directly.
The regularisation $\phi_\varepsilon$ resolves this by replacing
$x$ with $\phi_\varepsilon(x)$ under the square root, which
yields $\tilde\sigma\in\mathcal C^3_b(\mathbb R)$
(Remark~\ref{rem:A2_verification}) while leaving $\tilde\sigma$
already bounded away from $0$ by $\sigma_0>0$
(Assumption~\ref{ass:A4}), for every $x\in\mathbb R$ — ellipticity
is therefore automatic here and is not the difficulty being resolved.

We apply \cite[Theorem~4.1]{loosveldt2025} with $d = m = 1$
to our specific model.
The proof consists of five steps: we first construct the auxiliary
processes needed, then verify the two conditions (SGD) and (RAC)
of the Kusuoka--Stroock framework, and finally identify the
Malliavin derivative explicitly.

\medskip
\noindent\textbf{Step 1: Construction of the shifted process $S^h$.}

Fix $h \in \mathfrak{H}$ and $\varepsilon \in \mathbb{R}$.
Since $Z_t^{(q,H)} = I_q(L_t^{H,q})$ with $L_t^{H,q} \in
\mathfrak{H}^{\odot q}$, the Taylor formula for Malliavin calculus
\cite[Theorem~3.2, Corollary~3.3]{loosveldt2025}
provides a version $\widetilde{Z}^{(q,H)}$ of $Z^{(q,H)}$ and
an event $\Omega_h$ with $\mathbb{P}(\Omega_h) = 1$ such that,
for every $\omega \in \Omega_h$,
\begin{equation}
\label{eq:taylor_hermite}
Z_t^{(q,H)}(\omega + \varepsilon j(h))
= \sum_{k=0}^{q} \frac{\varepsilon^k}{k!}
\bigl\langle D^k Z_t^{(q,H)}(\omega),\, h^{\otimes k}
\bigr\rangle_{\mathfrak{H}^{\otimes k}}.
\end{equation}
Define the process $S^h = \{S^h_{t,\varepsilon}\}_{t \in [0,T],\,
\varepsilon \in \mathbb{R}}$ by
\begin{equation}
\label{eq:defS}
S^h_{t,\varepsilon}(\omega)
:= \sum_{k=0}^{q} \frac{\varepsilon^k}{k!}
\bigl\langle D^k Z_t^{(q,H)}(\omega),\, h^{\otimes k}
\bigr\rangle_{\mathfrak{H}^{\otimes k}}.
\end{equation}
By \cite[Proposition~3.4]{loosveldt2025}, under
Assumption~\ref{ass:A3}, the following properties hold almost surely:
\begin{enumerate}[label=(\alph*)]
\item $S^h_{\star,\varepsilon}(\omega) \in
      W_2^{1-\alpha}(0,T;\mathbb{R})$ for every $\varepsilon \in \mathbb{R}$;
\item $\varepsilon \mapsto S^h_{\star,\varepsilon}(\omega)$ is of class
      $\mathcal{C}^\infty$ from $\mathbb{R}$ into
      $W_2^{1-\alpha}(0,T;\mathbb{R})$, with derivative
      \[
      \frac{d}{d\varepsilon} S^h_{\star,\varepsilon}(\omega)
      = \bigl\langle D Z_\star^{(q,H)}(\omega),\, h
        \bigr\rangle_{\mathfrak{H}}.
      \]
\end{enumerate}
In particular, $S^h$ is a version of the shifted process
$\{Z_t^{(q,H)}(\cdot + \varepsilon j(h))\}$.

\medskip
\noindent\textbf{Step 2: Construction of $\Gamma^h$ and
verification of (SGD).}

For $h \in \mathfrak{H}$ and $\varepsilon \in \mathbb{R}$, define
\begin{equation}
\label{eq:defGamma}
\Gamma^h_t(\varepsilon, \omega)
:= \Psi_t\bigl(S^h_{\star,\varepsilon}(\omega)\bigr),
\end{equation}
where $\Psi : W_2^{1-\alpha}(0,T;\mathbb{R}) \to
W_1^\alpha(0,T;\mathbb{R})$ is the solution map of the
deterministic equation \eqref{eq:main_sde}, which is Fr\'{e}chet
differentiable by \cite[Proposition~2.4]{loosveldt2025}
(cf.\ \cite[Proposition~4]{nualart2009malliavin}).
By~\eqref{eq:taylor_hermite}, for every $h \in \mathfrak{H}$,
$\varepsilon \in \mathbb{R}$ and almost every $\omega \in \Omega$,
\[
X_t(\omega + \varepsilon j(h)) = \Gamma^h_t(\varepsilon, \omega).
\]
Since $\varepsilon \mapsto S^h_{\star,\varepsilon}(\omega)$ is
$\mathcal{C}^\infty$ by property (b) of Step~1, and $\Psi_t$ is
Fr\'{e}chet differentiable, the chain rule gives
\begin{equation}
\label{eq:derivGamma}
\frac{d}{d\varepsilon}\Gamma^h_t(\varepsilon,\omega)
= \mathcal{D}\Psi_t\bigl(S^h_{\star,\varepsilon}(\omega)\bigr)
  \Bigl[\frac{d}{d\varepsilon} S^h_{\star,\varepsilon}(\omega)\Bigr].
\end{equation}
Evaluating at $\varepsilon = 0$ and using
$S^h_{\star,0}(\omega) = Z_\star^{(q,H)}(\omega)$:
\begin{equation}
\label{eq:derivGamma0}
\left.\frac{d}{d\varepsilon}\Gamma^h_t(\varepsilon,\omega)
\right|_{\varepsilon=0}
= \mathcal{D}\Psi_t\bigl(Z_\star^{(q,H)}(\omega)\bigr)
  \bigl[\langle D Z_\star^{(q,H)}(\omega), h\rangle_{\mathfrak{H}}\bigr].
\end{equation}
Since almost sure convergence implies convergence in probability,
the limit
\[
\frac{1}{\varepsilon}\bigl(X_t(\cdot + \varepsilon j(h)) - X_t\bigr)
= \frac{1}{\varepsilon}\bigl(\Gamma^h_t(\varepsilon,\cdot)
  - \Gamma^h_t(0,\cdot)\bigr)
\xrightarrow{\;\mathbb{P}\;}
\mathcal{D}\Psi_t\bigl(Z_\star^{(q,H)}\bigr)
\bigl[\langle DZ_\star^{(q,H)},h\rangle_\mathfrak{H}\bigr]
\]
holds as $\varepsilon \to 0$, establishing \textbf{(SGD)} for $X_t$
with stochastic G\^{a}teaux derivative
\begin{equation}
\label{eq:SGD_Xt}
\hat{D} X_t [j(h)]
= \mathcal{D}\Psi_t\bigl(Z_\star^{(q,H)}\bigr)
  \bigl[\langle DZ_\star^{(q,H)},h\rangle_\mathfrak{H}\bigr].
\end{equation}

\medskip
\noindent\textbf{Step 3: Verification of (RAC).}

By \cite[Theorem~3.6]{loosveldt2025}, for each $h \in \mathfrak{H}$
there exists a version $\widetilde{Z}^{h}$ of $Z^{(q,H)}$ such
that, for every $\omega \in \Omega$, the map
$\varepsilon \mapsto \widetilde{Z}^{h}_\star(\omega + \varepsilon j(h))$
is of class $\mathcal{C}^\infty$ from $\mathbb{R}$ into
$W_2^{1-\alpha}(0,T;\mathbb{R})$.
Define $\widetilde{X}^h_t(\omega) :=
\Psi_t(\widetilde{Z}^h_\star(\omega))$.
Then:
\begin{itemize}
  \item $\widetilde{X}^h_t = X_t$ almost surely
        (since $\widetilde{Z}^h$ is a version of $Z^{(q,H)}$
        and the solution map $\Psi$ is deterministic);
  \item for every $\omega \in \Omega$, the map
        $\varepsilon \mapsto \widetilde{X}^h_t(\omega + \varepsilon j(h))
        = \Psi_t(\widetilde{Z}^h_\star(\omega + \varepsilon j(h)))$
        is absolutely continuous in $\varepsilon$,
        as a composition of the $\mathcal{C}^1$ map
        $\varepsilon \mapsto \widetilde{Z}^h_\star(\omega + \varepsilon j(h))$
        with the Fr\'{e}chet differentiable map $\Psi_t$.
\end{itemize}
This establishes \textbf{(RAC)} for $X_t$.

\medskip
\noindent\textbf{Step 4: Identification of the Malliavin derivative.}

By Theorem~\ref{thm:KS} (Kusuoka--Stroock equals Shigekawa),
since $X_t$ satisfies both (RAC) and (SGD) with
$\hat{D}X_t[j(h)] \in L^p(\Omega)$ for all $p \geq 1$
(which follows from the $L^p$ boundedness of $\mathcal{D}\Psi_t$
and the hypercontractivity of $DZ^{(q,H)}$),
we conclude $X_t \in \mathbb{D}^{1,\infty}$ and
$\langle DX_t, h\rangle_\mathfrak{H} = \hat{D}X_t[j(h)]$.

Substituting the Fr\'{e}chet derivative formula
\cite[Proposition~2.4]{loosveldt2025} into~\eqref{eq:SGD_Xt}:
\[
\langle DX_t^k, h\rangle_\mathfrak{H}
= \mathcal{D}\Psi_t(Z_\star^{(q,H)})
  [\langle DZ_\star^{(q,H)}, h\rangle_\mathfrak{H}]
= \int_0^t \Theta_t(s)\,
  d\langle DZ_s^{(q,H)}, h\rangle_\mathfrak{H},
\]
which is~\eqref{eq:malliavin_derivative}.

\medskip
\noindent\textbf{Step 5: Identification of $\Theta_t$.}

The process $\Theta_t(s)$ is the fundamental solution of the
linearised equation associated with~\eqref{eq:main_sde}.
Differentiating the solution map $\Psi$ in the direction
$\langle DZ_\star^{(q,H)}, h\rangle_\mathfrak{H}$ via
\cite[Proposition~2.4]{loosveldt2025}, one obtains that
$s \mapsto \Theta_t(s)$ satisfies the
\emph{first variation equation}:
\begin{align*}
\Theta_t(s)
&= \tilde{\sigma}(X_s)
+ \int_s^t (-a)\,\Theta_u(s)\,du
+ \int_s^t \tilde{\sigma}'(X_u)\,\Theta_u(s)\,dZ_u^{(q,H)},
\qquad 0 \leq s \leq t,
\end{align*}
with $\Theta_t(s) = 0$ for $s > t$.
This is a linear Young SDE in $\Theta_t(\cdot)$ driven by
$Z^{(q,H)}$.
Under Assumption~\ref{ass:A2} ($\tilde{\sigma}' \in
\mathcal{C}^2_b(\mathbb{R})$) and Assumption~\ref{ass:A3},
Theorem~\ref{thm:existence} (applied to the linearised equation)
guarantees the existence of a unique solution
$s \mapsto \Theta_t(s) \in W_1^\alpha(0,t;\mathbb{R})$.

Since $\phi_\varepsilon$ is defined and smooth on all of
$\mathbb R$ (Assumption~\ref{ass:A2}), the derivative
$\tilde\sigma'(x) = \sigma_1\phi_\varepsilon'(x)/(2\sqrt{\phi_\varepsilon(x)})$
is well defined for every $x\in\mathbb R$.
We record its simplified form on $\{x\geq0\}$, where
$\phi_\varepsilon(x)=x+\varepsilon$ by construction (condition~(a) of
Section~\ref{subsec:model}); no probabilistic positivity statement
is needed for this simplification, which is purely a matter of the
definition of $\phi_\varepsilon$:
\[
\tilde{\sigma}'(x) = \frac{\sigma_1\,\phi_\varepsilon'(x)}
{2\sqrt{\phi_\varepsilon(x)}}
= \frac{\sigma_1}{2\sqrt{x+\varepsilon}},
\quad x \geq 0,
\]
which is bounded (by $\sigma_1/(2\sqrt{\varepsilon})$) and
Lipschitz on $[0,+\infty)$ under Assumption~\ref{ass:A1}.
The variational equation~\eqref{eq:theta} becomes:
\begin{equation}
\label{eq:theta_explicit}
\Theta_t(s) = \tilde{\sigma}(X_s)
+ \int_s^t (-a)\,\Theta_u(s)\,du
+ \int_s^t \frac{\sigma_1}{2\sqrt{X_u+\varepsilon}}\,
  \Theta_u(s)\,dZ_u^{(q,H)}.
\end{equation}
This completes the proof.
\end{proof}

\begin{remark}
\label{rem:malliavin_gaussian}
When $q = 1$, the driving process $Z^{(1,H)} = B^H$ is a
fractional Brownian motion, and
Theorem~\ref{thm:malliavin} reduces to~\cite[Theorem~6]{nualart2009malliavin}.
The result of~\cite{loosveldt2025} sharpens the conclusion of
\cite{nualart2009malliavin} from $X_t \in \mathbb{D}_{\mathrm{loc}}^{1,2}$
to $X_t \in \mathbb{D}^{1,\infty}$, and extends it to all
$q \geq 1$ simultaneously.
\end{remark}

\begin{remark}
\label{rem:malliavin_formula}
Equation~\eqref{eq:theta} is the \emph{variational equation}
associated with~\eqref{eq:main_sde}: $\Theta_t(s)$ represents
the sensitivity of the solution at time $t$ to a perturbation
of the driving noise at time $s \leq t$.
On $\{X_u\geq 0\}$, $\phi_\varepsilon(X_u) = X_u + \varepsilon$ by
construction (condition~(a) of Section~\ref{subsec:model}), so that
$\tilde{\sigma}'(X_u) = \sigma_1/(2\sqrt{X_u+\varepsilon})$ there; no
a.s.\ positivity statement is invoked. On this event,
equation~\eqref{eq:theta} becomes
\[
\Theta_t(s) = \tilde{\sigma}(X_s)
+ \int_s^t (-a)\,\Theta_u(s)\,du
+ \int_s^t \frac{\sigma_1}{2\sqrt{X_u+\varepsilon}}\,
\Theta_u(s)\,dZ_u^{(q,H)}.
\]
This is a linear Young SDE in $\Theta_t(\cdot)$, driven by the
same Hermite process $Z^{(q,H)}$, whose unique solution inherits
the H\"{o}lder regularity of $X$.
\end{remark}

% ---------------------------------------------------------------
\subsection{Absolute Continuity of the Law}
\label{subsec:density}

The Malliavin differentiability established in
Theorem~\ref{thm:malliavin} is the key ingredient for proving that
the law of $X_t$ has a density with respect to the Lebesgue measure.
We use the Bouleau--Hirsch criterion~\cite{Bouleau1986}, together
with the non-degeneracy of the Hermite process Malliavin matrix
established in~\cite{loosveldt2025} (Theorem~6.6 therein).

\begin{theorem}[Bouleau--Hirsch criterion, \cite{Bouleau1986}]
\label{thm:bouleau_hirsch}
Let $Y \in \mathbb{D}^{1,2}$ be a real random variable such that
$\|DY\|_{\mathfrak{H}}^2 > 0$ almost surely.
Then the law of $Y$ is absolutely continuous with respect to the
Lebesgue measure on $\mathbb{R}$.
\end{theorem}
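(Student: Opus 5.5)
The plan is the classical argument (as in Nualart's monograph, Theorem~2.1.3): show directly that $\mathbb{P}(Y\in B)=0$ for every Lebesgue-null Borel set $B\subset\R$. By countable additivity it suffices to treat $B\subset[-M,M]$ for a fixed $M>0$, since $\R$ is a countable union of such intervals.

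\textbf{Step 1 (smooth approximations of $\mathbf 1_B$).} Fix a Lebesgue-null Borel set $B\subset[-M,M]$ and put $\mu:=\lambda+\mathbb{P}\circ Y^{-1}$, where $\lambda$ is Lebesgue measure; $\mu$ is $\sigma$-finite and its restriction to $[-M,M]$ is finite. Since continuous compactly supported functions are dense in $L^1([-M-1,M+1],\mu)$, I will choose continuous $\psi_n:\R\to[0,1]$ supported in $[-M-1,M+1]$ with $\psi_n\to\mathbf 1_B$ $\mu$-a.e. (pass to a subsequence of an $L^1(\mu)$-approximation and truncate to $[0,1]$). Then I set
\[
\varphi_n(y):=\int_{-\infty}^y\psi_n(z)\,dz .
\]
Each $\varphi_n$ is $\mathcal C^1$, bounded by $2M+2$, with bounded derivative $\varphi_n'=\psi_n$.

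\textbf{Step 2 (chain rule and passage to the limit).} The chain rule in $\mathbb{D}^{1,2}$ for $\mathcal C^1$ functions with bounded derivative gives
\[
\varphi_n(Y)\in\mathbb{D}^{1,2},\qquad D\varphi_n(Y)=\psi_n(Y)\,DY .
\]
I then take limits separately.
\begin{itemize}
\item Since $\psi_n\to\mathbf 1_B$ $\lambda$-a.e. with uniform domination by $\mathbf 1_{[-M-1,M+1]}$, dominated convergence gives $\varphi_n(y)\to\int_{-\infty}^y\mathbf 1_B\,dz=0$ for every $y$, because $\lambda(B)=0$. Boundedness of $\varphi_n$ then yields $\varphi_n(Y)\to0$ in $L^2(\Omega)$.
\item Since $\psi_n\to\mathbf 1_B$ also $\mathbb{P}\circ Y^{-1}$-a.e., we have $\psi_n(Y)\to\mathbf 1_B(Y)$ almost surely. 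The bound $|\psi_n(Y)|\,\|DY\|_{\mathfrak H}\le\|DY\|_{\mathfrak H}\in L^2$ and dominated convergence give $\psi_n(Y)DY\to\mathbf 1_B(Y)DY$ in $L^2(\Omega;\mathfrak H)$.
\end{itemize}

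\textbf{Step 3 (closability and conclusion).} The operator $D$ is closable; equivalently, $\mathbb{D}^{1,2}$ is complete, so $D$ is closed on it. Hence the limit $0$ belongs to $\mathbb{D}^{1,2}$ and $D0=\mathbf 1_B(Y)DY$. This means $\mathbf 1_B(Y)\,\|DY\|_{\mathfrak H}=0$ almost surely. Because $\|DY\|_{\mathfrak H}>0$ almost surely by hypothesis, it follows that $\mathbf 1_B(Y)=0$ almost surely, that is, $\mathbb{P}(Y\in B)=0$.

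\textbf{Main obstacle.} The delicate point is Step 1: the approximants must converge simultaneously Lebesgue-a.e., which kills $\varphi_n$, and $\mathbb{P}\circ Y^{-1}$-a.e., which identifies the limit of $D\varphi_n(Y)$. This is exactly why the approximation is carried out in $L^1(\mu)$ with $\mu=\lambda+\mathbb{P}\circ Y^{-1}$ rather than with respect to either measure alone. The remaining ingredients, the $\mathcal C^1_b$ chain rule and the closedness of $D$, are standard facts of the Shigekawa theory, to which Theorem~\ref{thm:KS} identifies our setting.
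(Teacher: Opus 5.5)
Your proof is correct. The paper does not prove this statement at all: it quotes the criterion from Bouleau--Hirsch as a black box and uses it only in the scalar case $Y=X_t$ of Theorem~\ref{thm:density}. So your route necessarily differs from the paper's; what you give is the standard one-dimensional proof from Nualart's monograph, Theorem~2.1.3, specialised to $\mathbb{D}^{1,2}$.

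Where Nualart replaces $Y$ by $\arctan Y$ to handle unboundedness, you instead localise the null set to $[-M,M]$. The key idea is exactly right: approximate $\mathbf 1_B$ almost everywhere with respect to $\lambda+\mathbb{P}\circ Y^{-1}$. Then the primitives $\varphi_n(Y)$ vanish in the limit, while their derivatives $D\varphi_n(Y)$ converge to $\mathbf 1_B(Y)DY$.

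Your route is self-contained and rests on two elementary facts: the $\mathcal C^1$ chain rule with bounded derivative, and the closedness of $D$. That is all the paper's scalar application needs. The cited result buys generality instead: Bouleau and Hirsch work with Dirichlet forms, and also treat $\mathbb{R}^d$-valued functionals with a.s.\ invertible Malliavin matrix, where the argument is substantially more delicate.

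Two small points of wording.
\begin{itemize}
\item For the zero extension of $\psi_n$ to be continuous on $\mathbb{R}$, take the approximants in $C_c((-M-1,M+1))$. This is legitimate, since that space is dense in $L^1$ of the finite Radon measure $\mu$ there and $B\subset[-M,M]$.
\item Closability of $D$ is not literally equivalent to completeness of $\mathbb{D}^{1,2}$, since an abstract completion is always complete. What closability provides is that this completion embeds injectively into $L^2(\Omega)$, so that the extended $D$ is a closed operator. That closedness is the property your Step~3 actually uses.
\end{itemize}
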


The non-degeneracy condition $\|DX_t\|_\mathfrak{H}^2 > 0$ a.s.\
follows from the non-degeneracy of the driving Hermite process,
established in~\cite[Theorem~6.6]{loosveldt2025}
(which verifies Hypothesis~$\mathbf{(H_5)}$ for Hermite processes).

\emph{This result follows from the Bouleau--Hirsch criterion by
verifying non-degeneracy of the Malliavin matrix of $X_t$,
using Theorem~\ref{thm:malliavin}.
The main contribution is this non-degeneracy verification for
the CIR-Hermite setting.}

\begin{theorem}[Absolute Continuity: verification of non-degeneracy]
\label{thm:density}
Under Assumptions~\ref{ass:A1}--\ref{ass:A4}, for every
$t \in (0,T]$, the law of the solution $X_t$
to~\eqref{eq:main_sde} is absolutely continuous with respect
to the Lebesgue measure on $\mathbb{R}$.
That is, there exists a measurable function
$p_t : \mathbb{R} \to [0, +\infty)$ such that
\[
\mathbb{P}(X_t \in A) = \int_A p_t(x)\,dx
\]
for every Borel set $A \subset \mathbb{R}$.
\end{theorem}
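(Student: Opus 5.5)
The plan is to apply the Bouleau--Hirsch criterion (Theorem~\ref{thm:bouleau_hirsch}) to $Y=X_t$. By Theorem~\ref{thm:malliavin}, $X_t\in\mathbb{D}^{1,\infty}\subset\mathbb{D}^{1,2}$, so it remains to prove that $\|DX_t\|_{\mathfrak H}^2>0$ almost surely. The first step is to turn the variational equation~\eqref{eq:theta} into a factorised formula for $\Theta_t(s)$. Because $d=m=1$ and the equation is linear with the ordinary (Young) chain rule, it can be solved explicitly:
\[
\Theta_t(s)=\tilde\sigma(X_s)\,\exp\Bigl(-a(t-s)+\int_s^t\tilde\sigma'(X_u)\,dZ^{(q,H)}_u\Bigr).
\]
Equivalently, set $\Phi_t:=\exp\bigl(-at+\int_0^t\tilde\sigma'(X_u)\,dZ_u^{(q,H)}\bigr)$. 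Then $\Theta_t(s)=\Phi_t\Phi_s^{-1}\tilde\sigma(X_s)$, with $\Phi$ pathwise finite and nonvanishing. By Assumption~\ref{ass:A4}, $\tilde\sigma(X_s)\geq\sigma_0>0$, so $s\mapsto\Theta_t(s)$ is continuous and strictly positive on $[0,t]$ for every $\omega$.

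The second step substitutes this into~\eqref{eq:malliavin_derivative}:
\[
\langle DX_t,h\rangle_{\mathfrak H}=\Phi_t\int_0^t \Phi_s^{-1}\tilde\sigma(X_s)\,d\langle DZ^{(q,H)}_s,h\rangle_{\mathfrak H}.
\]
Suppose $DX_t=0$ on an event $A$. On $A$ this integral vanishes for every $h$ in a countable dense subset of $\mathfrak H$, hence for all $h$ by continuity. The aim is a contradiction whenever $\mathbb{P}(A)>0$. Write $g_s:=\Phi_s^{-1}\tilde\sigma(X_s)$, which is strictly positive and of Young regularity, and set $Y_s:=\langle DZ_s^{(q,H)},h\rangle=q\,I_{q-1}(L^{H,q}_s\otimes_1 h)$. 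Integration by parts gives $\int_0^t g_s\,dY_s=g_tY_t-\int_0^tY_s\,dg_s$. The vanishing of the pairing therefore says that the $\mathfrak H$-valued process $s\mapsto DZ_s^{(q,H)}$ is annihilated by the positive ``measure'' $dg$ on $[0,t]$.

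The main obstacle is this last non-degeneracy step. Here I would invoke the non-degeneracy of the Hermite driver, namely Hypothesis~$\mathbf{(H_5)}$ as verified in~\cite[Theorem~6.6]{loosveldt2025}. It states that for any random Young-integrable integrand $g$ that does not vanish identically on $[0,t]$, one has $\bigl\|\int_0^t g_s\,dDZ_s^{(q,H)}\bigr\|_{\mathfrak H}>0$ almost surely. Applied to $g_s=\Phi_s^{-1}\tilde\sigma(X_s)\geq \sigma_0\inf_{s\le t}\Phi_s^{-1}>0$, it yields $\|DX_t\|_{\mathfrak H}>0$ a.s., and Theorem~\ref{thm:bouleau_hirsch} concludes.

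If the cited hypothesis must be checked by hand, the case $q=1$ is easiest. There $DZ_s=L^{H,1}_s$ is deterministic, and the integral equals $\int_0^t g_s\,\partial_s L^{H,1}_s(\xi)\,ds$, whose $L^2(d\xi)$-norm is nonzero by injectivity of the fractional integral operator. For $q\geq2$, I would first condition on the chaos structure, using that $I_{q-1}(L_s^{H,q}\otimes_1\cdot)$ is a.s.\ nonzero by Proposition~\ref{prop:hermite_properties}(vi). I would then apply the same injectivity argument to the kernel $\partial_sL_s^{H,q}$ with the random positive weight $g$. This is where I expect the genuine difficulty, because $g$ is not adapted in a way that allows a direct Fubini argument.
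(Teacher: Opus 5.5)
Your proposal is correct and follows the paper's route. You apply Bouleau--Hirsch using $X_t\in\mathbb{D}^{1,\infty}$ from Theorem~\ref{thm:malliavin}, and you get non-degeneracy of $\|DX_t\|_{\mathfrak H}$ by applying Hypothesis~$\mathbf{(H_5)}$ of \cite[Theorem~6.6]{loosveldt2025} to the kernel $\Theta_t(\cdot)$, which cannot vanish by ellipticity.

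Your explicit formula $\Theta_t(s)=\tilde\sigma(X_s)\exp\bigl(-a(t-s)+\int_s^t\tilde\sigma'(X_u)\,dZ_u^{(q,H)}\bigr)$ gives strict positivity at every $s\in[0,t]$ directly. This is a cleaner substitute for the paper's Step~4, which effectively identifies $\Theta_t(s_0)$ with the initial value $\tilde\sigma(X_{s_0})$. Separately, your integration-by-parts remark about a positive ``measure'' $dg$ is unnecessary and inaccurate, since $g$ is only H\"older and not monotone.
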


\begin{proof}
We apply the Bouleau--Hirsch criterion
(Theorem~\ref{thm:bouleau_hirsch}) to $Y = X_t$.
The proof consists of four steps.

\medskip
\noindent\textbf{Step 1: Malliavin differentiability.}

By Theorem~\ref{thm:malliavin} (under
Assumptions~\ref{ass:A1}--\ref{ass:A3}),
$X_t \in \mathbb{D}^{1,\infty} \subset \mathbb{D}^{1,2}$.
This is the first condition of the Bouleau--Hirsch criterion.

\medskip
\noindent\textbf{Step 2: Reduction to non-degeneracy of
$\Vert DX_t \Vert_\mathfrak{H}$.}

The Malliavin matrix of $X_t$ in the scalar case ($d=1$) reduces
to the scalar
\[
\Gamma_{X_t} = \|DX_t\|_\mathfrak{H}^2
= \sum_{n=1}^\infty \langle DX_t, e_n\rangle_\mathfrak{H}^2,
\]
where $\{e_n\}_{n\geq 1}$ is any orthonormal basis of $\mathfrak{H}$.
By the Bouleau--Hirsch criterion, it suffices to show that
$\|DX_t\|_\mathfrak{H}^2 > 0$ almost surely.
We proceed by contradiction: suppose there exists
$\Omega_0 \in \mathcal{F}$ with $\mathbb{P}(\Omega_0) > 0$ such that
\begin{equation}
\label{eq:contradiction_hyp}
\|DX_t(\omega)\|_\mathfrak{H}^2 = 0
\qquad \text{for all } \omega \in \Omega_0.
\end{equation}
By the representation~\eqref{eq:malliavin_derivative}, for every
$n \geq 1$ and $\omega \in \Omega_0$,
\[
0 = \langle DX_t(\omega), e_n\rangle_\mathfrak{H}
= \int_0^t \Theta_t(s,\omega)\,
d\langle DZ_s^{(q,H)}(\omega), e_n\rangle_\mathfrak{H}.
\]
Since $\{e_n\}_{n\geq 1}$ is a complete orthonormal system,
summing the squared norms over $n$ and using Parseval's identity:
\begin{equation}
\label{eq:norm_zero}
\left\|\int_0^t \Theta_t(s,\omega)\,
dDZ_s^{(q,H)}(\omega)\right\|_\mathfrak{H}^2
= \sum_{n=1}^\infty
\left(\int_0^t \Theta_t(s,\omega)\,
d\langle DZ_s^{(q,H)}(\omega), e_n\rangle_\mathfrak{H}\right)^2 = 0
\end{equation}
for all $\omega \in \Omega_0$, i.e.,
\begin{equation}
\label{eq:integral_zero}
\left\|\int_0^t \Theta_t(s,\omega)\,
dDZ_s^{(q,H)}(\omega)\right\|_\mathfrak{H} = 0
\qquad \forall\, \omega \in \Omega_0.
\end{equation}

\medskip
\noindent\textbf{Step 3: Verification of hypothesis $\mathbf{(H_5)}$
for our model.}

We now verify that the non-degeneracy
hypothesis~$\mathbf{(H_5)}$ of~\cite{loosveldt2025} holds for
the CIR-Hermite model with $m = d = 1$.
In our setting, $\mathbf{(H_5)}$ reads: for any
$\gamma > \tfrac{1}{2}$-Hölder continuous process
$(Y_r)_{r \in [0,t]}$ and any $\Omega_0$ with
$\mathbb{P}(\Omega_0) > 0$,
\[
\left\|\int_0^t Y_s(\omega)\,dDZ_s^{(q,H)}(\omega)
\right\|_\mathfrak{H} = 0
\quad \forall\, \omega \in \Omega_0
\]
\[
\quad \Longrightarrow \quad
\exists\, \Omega_1 \subseteq \Omega_0,\;
\mathbb{P}(\Omega_1) > 0,\;
\exists\, s_0 \in [0,t] \text{ s.t. }
Y_{s_0}(\omega) = 0 \;\; \forall\, \omega \in \Omega_1.
\]
By~\cite[Theorem~6.6]{loosveldt2025}, this property holds for
the Hermite process $Z^{(q,H)}$ for all $q \geq 1$ and
$H \in (\tfrac{1}{2},1)$.
Indeed, the proof uses the self-similarity and the
scaling property of the Malliavin derivative $DZ^{(q,H)}$,
established via Lemma~6.5 of~\cite{loosveldt2025}:
for every $t > 0$ and $\varepsilon \in (0,t)$,
\[
\int_{t-\varepsilon}^t
|D_r Z_t^{(q,H)} - D_r Z_{t-\varepsilon}^{(q,H)}|^2\,dr
\overset{\mathrm{law}}{=}
\varepsilon^{2H}
\int_0^1 |D_r Z_1^{(q,H)}|^2\,dr.
\]

Since $s \mapsto \Theta_t(s)$ solves a linear equation of exactly
the form treated in~\cite[Proposition~9]{nualart2009malliavin}
(with $\gamma(s) = \tilde\sigma(X_s)$, $B_u = -a$ and
$S_u = \tilde\sigma'(X_u)$, all $(H-\beta)$-Hölder continuous by
Theorem~\ref{thm:existence}), that proposition gives that
$s \mapsto \Theta_t(s)$ is $(H-\beta)$-Hölder continuous
(with $H - \beta > \tfrac{1}{2}$ under Assumption~\ref{ass:A3}),
so $\mathbf{(H_5)}$ applies to $Y_s = \Theta_t(s)$.
Combining with~\eqref{eq:integral_zero}:
there exist $\Omega_1 \in \mathcal{F}$ with
$\mathbb{P}(\Omega_1) > 0$ and $s_0 \in [0,t]$ such that
\begin{equation}
\label{eq:theta_zero}
\Theta_t(s_0, \omega) = 0
\qquad \text{for all } \omega \in \Omega_1.
\end{equation}

\medskip
\noindent\textbf{Step 4: Contradiction via ellipticity.}

We show that~\eqref{eq:theta_zero} is impossible under
Assumption~\ref{ass:A4}.
Consider the variational equation~\eqref{eq:theta_explicit}
viewed as a linear Young SDE in $u \mapsto \Theta_t(u)$
for $u \in [s_0, t]$, with initial condition
$\Theta_t(s_0)$ at $u = s_0$:
\begin{equation}
\label{eq:theta_linear_sde}
\Theta_t(u) = \Theta_t(s_0)
+ \int_{s_0}^u (-a)\,\Theta_t(r)\,dr
+ \int_{s_0}^u \tilde{\sigma}'(X_r)\,\Theta_t(r)\,dZ_r^{(q,H)},
\qquad u \in [s_0, t].
\end{equation}
This is a \emph{linear} homogeneous Young SDE, whose solution
has the explicit representation (Duhamel principle):
\begin{equation}
\label{eq:theta_duhamel}
\Theta_t(u) = \Theta_t(s_0) \cdot \mathcal{E}(s_0, u),
\end{equation}
where $\mathcal{E}(s_0, u)$ is the solution to
\[
\mathcal{E}(s_0, u) = 1
+ \int_{s_0}^u (-a)\,\mathcal{E}(s_0,r)\,dr
+ \int_{s_0}^u \tilde{\sigma}'(X_r)\,\mathcal{E}(s_0,r)\,dZ_r^{(q,H)},
\]
i.e., the \emph{stochastic exponential} of $-a\,du +
\tilde{\sigma}'(X_u)\,dZ_u^{(q,H)}$.

The initial condition $\Theta_t(s_0)$ is computed directly from
the variational equation evaluated at $u = s_0$
(the integral terms in~\eqref{eq:theta_explicit} vanish at
$u = s_0$ since the integration range $[s_0, s_0]$ is empty):
\begin{equation}
\label{eq:theta_initial}
\Theta_t(s_0) = \tilde{\sigma}(X_{s_0})
= \sigma_0 + \sigma_1\sqrt{\phi_\varepsilon(X_{s_0})}.
\end{equation}
By Assumption~\ref{ass:A4}, $\tilde{\sigma}(x) \geq \sigma_0 > 0$
for all $x \in \mathbb{R}$ (regardless of the sign of $X_{s_0}$), so
\[
\Theta_t(s_0) = \tilde{\sigma}(X_{s_0}) \geq \sigma_0 > 0
\qquad \text{almost surely.}
\]
This directly contradicts~\eqref{eq:theta_zero}.
Hence~\eqref{eq:contradiction_hyp} cannot hold on any event
of positive probability, and we conclude:
\[
\mathbb{P}\!\left(\|DX_t\|_\mathfrak{H}^2 = 0\right) = 0,
\]
i.e., $\|DX_t\|_\mathfrak{H}^2 > 0$ almost surely.
The Bouleau--Hirsch criterion (Theorem~\ref{thm:bouleau_hirsch})
then gives the absolute continuity of the law of $X_t$
with respect to the Lebesgue measure on $\mathbb{R}$,
completing the proof of Theorem~\ref{thm:density}.
\end{proof}

\begin{remark}
\label{rem:density_gaussian}
For $q = 1$ (fractional CIR model), Theorem~\ref{thm:density}
recovers and strengthens~\cite[Theorem~8]{nualart2009malliavin}:
the density exists for all $H \in (\tfrac{1}{2}, 1)$ and all
$t \in (0,T]$.

For $q \geq 2$ (Rosenblatt-CIR and higher-order Hermite-CIR
models), this is a \emph{new result}: the existence of a density
for solutions of SDEs driven by non-Gaussian Hermite processes
had not been established prior to~\cite{loosveldt2025}.
\end{remark}

\begin{remark}[Support of the density]
\label{rem:positivity_density}
Theorem~\ref{thm:density} gives absolute continuity of the law of
$X_t$ on all of $\mathbb{R}$, not merely on $(0,+\infty)$: since
$\tilde\sigma(x)\geq\sigma_0>0$ for \emph{every} $x\in\mathbb R$
(Assumption~\ref{ass:A4}), the ellipticity used in the
Bouleau--Hirsch argument does not distinguish the sign of $X_t$, and
we do not claim that $p_t$ vanishes on $(-\infty,0]$.
This should be contrasted with the classical Brownian CIR model,
where the vanishing of the diffusion coefficient $\sigma\sqrt{x}$ at
$x=0$ is precisely what confines the density to $(0,+\infty)$ under
the Feller condition.
Here, because $\tilde\sigma$ is bounded away from $0$ (needed for
Assumption~\ref{ass:A4}) rather than vanishing at $0$, no such
confinement mechanism is available, consistently with
Theorem~\ref{thm:positivity} giving only a quantitative — not
almost-sure — positivity bound. Whether $p_t$ is in fact supported
on $(0,+\infty)$, or merely concentrated there with a small tail on
$(-\infty,0]$, is left open, in line with
Remark~\ref{rem:feller_necessity}.
\end{remark}

% ---------------------------------------------------------------
\subsection{Summary of the Theoretical Framework}
\label{subsec:summary_theory}

Table~\ref{tab:theory} summarises the four main theoretical results,
together with their relationship to~\cite{loosveldt2025}.

\begin{table}[H]
\centering
\caption{Theoretical results for the CIR-Hermite model and their
         sources in~\cite{loosveldt2025}.}
\label{tab:theory}
\begin{tabular}{@{}p{3.1cm}p{1.7cm}p{4.3cm}p{2.6cm}@{}}
\toprule
Result & Theorem & Tool & Source \\
\midrule
Existence \& uniqueness
  & Thm~\ref{thm:existence}
  & Young integral + \cite{MR1893308}
  & \cite[Thm~5.1]{MR1893308} \\[4pt]
Positivity (quantitative)
  & Thm~\ref{thm:positivity}
  & Lower barrier + hypercontractivity
  & This paper \\[4pt]
Malliavin differentiability
  & Thm~\ref{thm:malliavin}
  & Kusuoka--Stroock + Taylor formula
  & \cite[Thm~4.1]{loosveldt2025} \\[4pt]
Absolute continuity of law
  & Thm~\ref{thm:density}
  & Bouleau--Hirsch criterion
  & \cite[Thm~5.2]{loosveldt2025} \\
\bottomrule
\end{tabular}
\end{table}

The chain of implications is the following:

\[
\resizebox{\linewidth}{!}{$
\underbrace{
  \text{Assumptions~\ref{ass:A1}--\ref{ass:A3}}
}_{\text{parameters + coefficient \& driver regularity}}
\;\Longrightarrow\;
\underbrace{
  \text{Thm~\ref{thm:existence}}
}_{\exists!\, X_t \in W_1^\alpha}
\;\Longrightarrow\;
\underbrace{
  \text{Thm~\ref{thm:malliavin}}
}_{X_t \in \mathbb{D}^{1,\infty}}
\;\xRightarrow{+\,\text{Ass.~\ref{ass:A4}}}\;
\underbrace{
  \text{Thm~\ref{thm:density}}
}_{\text{law of } X_t \ll \lambda}
$}
\]

where $\lambda$ denotes the Lebesgue measure.
The ellipticity Assumption~\ref{ass:A4} ($\tilde{\sigma} \geq \sigma_0 > 0$)
is the key condition that makes the final implication possible,
by preventing the Malliavin matrix from degenerating.
Theorem~\ref{thm:positivity} (positivity) is not part of this
chain: as discussed in Remark~\ref{rem:malliavin_formula}, the
Malliavin and density results hold on all of $\mathbb{R}$ and do
not require positivity of $X_t$; positivity is established
independently, in parallel, as a quantitative probabilistic
statement.

% ================================================================
%  SECTION 4 — NUMERICAL ILLUSTRATIONS
% ================================================================
\section{Numerical Illustrations}
\label{sec:simulations}

\subsection{Simulation method}
\label{subsec:sim_method}

All numerical simulations in this section are carried out using
the wavelet-type expansion algorithm of Ayache, Hamonier and
Loosveldt~\cite{AyacheNum2025}, to appear in the
\textit{Annals of Applied Probability}.
This algorithm, which the authors have generously made available,
represents the first rigorous method for simulating sample paths
of Hermite processes of arbitrary order $q\geq 1$; for $q\geq 3$,
no alternative simulation method existed prior to that work.

The simulation process $\{\widetilde{S}^{(q)}_{h,J}(t)\}_{t\in[0,T]}$
is defined through the truncated wavelet-series representation
\begin{equation}
\label{eq:sim_process}
\widetilde{S}^{(q)}_{h,J}(t)
= 2^{-JH} \sum_{k \in \mathcal{J}^1_J(t)}
  \sigma^{(h)}_{J,k}
  \int_{\mathbb{R}}
  \prod_{\ell=1}^{q} \Phi_\Delta^{(h_\ell-1/2)}(s-k_\ell)\,ds,
\end{equation}
where $h_\ell = 1+(H-1)/q$, $J\in\mathbb{N}$ is the resolution level,
$\Phi_\Delta^{(\delta)}$ is the fractional scaling function of
Meyer~\cite{Ayache2025}, the coefficients $\sigma^{(h)}_{J,k}$ are
computed from Gaussian FARIMA$(0,h_\ell-1/2,0)$ sequences simulated
by the circulant matrix embedding (CME/Davies--Harte)
method~\cite{AyacheNum2025}, and $\mathcal{J}^1_J(t)$ is a
diagonal-like index set defined in~\cite{AyacheNum2025}.

\begin{remark}
By~\cite[Theorem~2.13]{AyacheNum2025}, the simulation process
$\widetilde{S}^{(q)}_{h,J}$ converges to the Hermite process
$Z^{(q,H)}$ almost surely and uniformly on any compact interval,
with explicit rate $O(J^{q/2}\,2^{-J(H-1/2)})$.
All figures below are produced with $J=17$, $a=0.99$,
$\varepsilon=10^{-3}$, $T=1$, which are the resolution and
parameters used by the authors of~\cite{AyacheNum2025} for their
own statistical Monte-Carlo experiments (their Tables 1-9); we refer
to that paper for a thorough discussion of parameter sensitivity.
\end{remark}

\subsection{Sample paths of Hermite processes}
\label{subsec:hermite_figs}

Figure~\ref{fig:hermite_paths} displays three independent realisations
of the standardised Hermite process $Z^{(q,H)}$ for each order
$q\in\{1,2,3\}$, with Hurst parameter $H=0.70$.
The three orders correspond to, respectively, fractional Brownian motion
(first Wiener chaos, Gaussian), the Rosenblatt process (second Wiener
chaos), and the Hermite process of order~3 (third Wiener chaos).
All three processes share the same covariance structure
(Proposition~\ref{prop:hermite_properties}), but exhibit
markedly different path regularity and tail behaviour.
In particular, the increasing non-Gaussianity as $q$ increases
from 1 to 3 is visible through the more irregular and asymmetric
excursions of the Rosenblatt and Hermite-3 paths.

\begin{figure}[H]
\centering
\includegraphics[width=\linewidth]{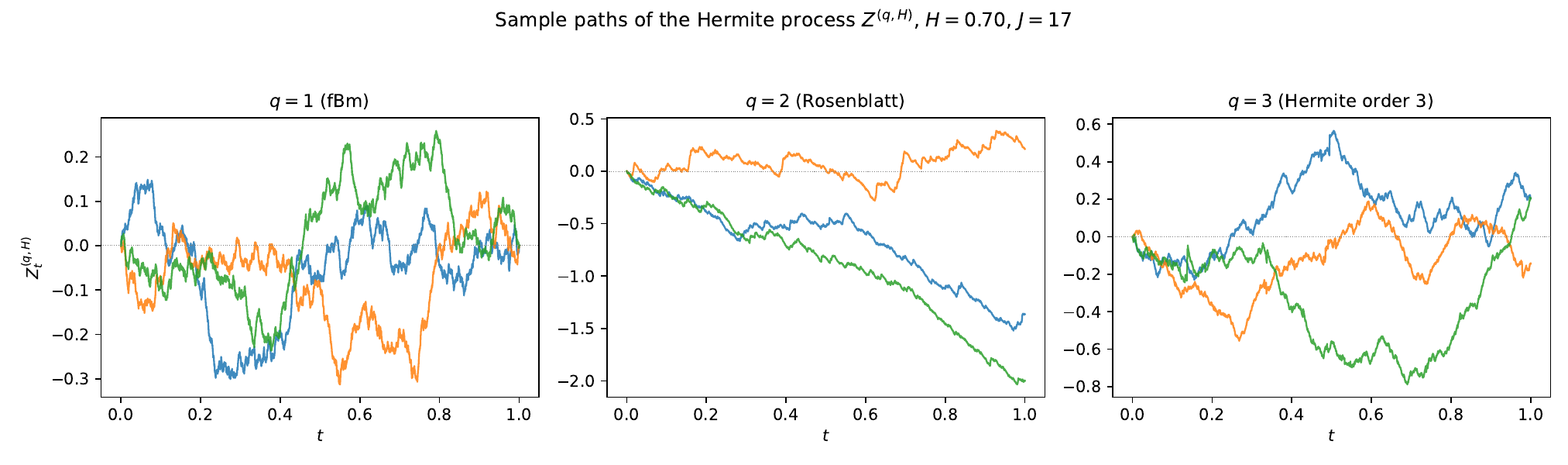}
\caption{Sample paths of the Hermite process $Z^{(q,H)}$ for
$q=1$ (fBm, left), $q=2$ (Rosenblatt, centre), and $q=3$
(Hermite order~3, right), with $H=0.70$ and $J=17$.
Each panel shows three independent realisations.
All paths are standardised to have unit variance for comparison.
The simulation uses the wavelet-series algorithm
of Ayache, Hamonier and Loosveldt~\cite{AyacheNum2025},
with FARIMA sequences computed by the circulant matrix
embedding method of Davies and Harte.
The monograph~\cite{tudorbook} provides a detailed account
of the theoretical properties of these processes.}
\label{fig:hermite_paths}
\end{figure}

Figure~\ref{fig:hurst_effect} illustrates the effect of the
Hurst parameter $H$ on the path regularity of the Rosenblatt
process ($q=2$) for four values $H\in\{0.60, 0.70, 0.80, 0.90\}$.
As $H$ increases toward 1, the paths become smoother and exhibit
stronger long-range dependence: nearby values of the process
become more positively correlated, and trends persist over longer
time horizons.
This is consistent with the H\"{o}lder regularity result
of Proposition~\ref{prop:hermite_properties}(iv), which guarantees
$\zeta$-H\"{o}lder continuous paths for any $\zeta<H$.

\begin{figure}[H]
\centering
\includegraphics[width=\linewidth]{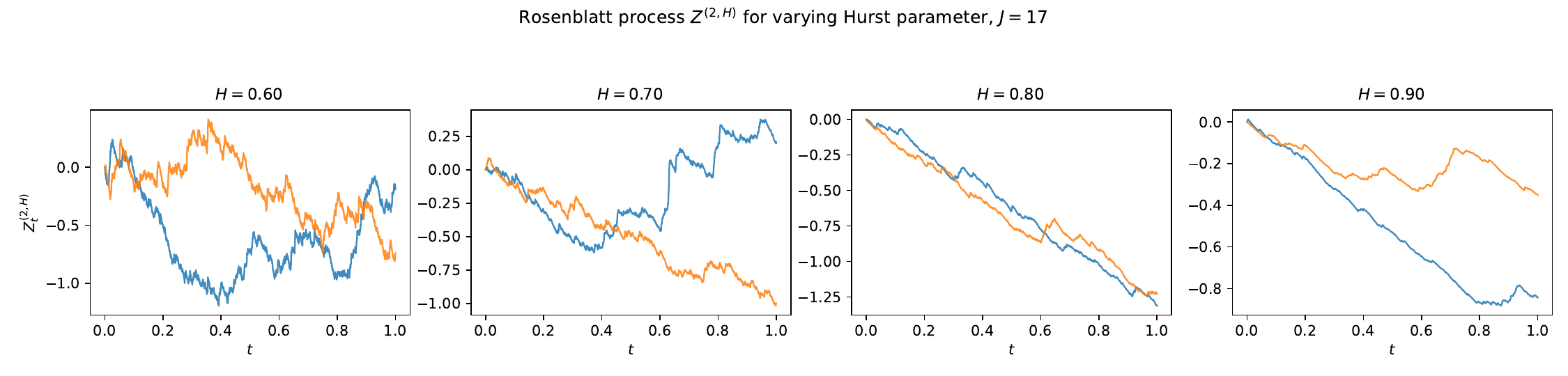}
\caption{Rosenblatt process $Z^{(2,H)}$ for varying Hurst parameter
$H\in\{0.60, 0.70, 0.80, 0.90\}$ (left to right), $J=17$.
Each panel shows two independent realisations.
As $H$ increases, paths become smoother and exhibit longer-range
dependence.
Simulation algorithm: Ayache, Hamonier and
Loosveldt~\cite{AyacheNum2025}.}
\label{fig:hurst_effect}
\end{figure}

\subsection{Sample paths of the CIR-Hermite model}
\label{subsec:cir_figs}

Figure~\ref{fig:cir_hermite} shows three independent realisations of
the CIR-Hermite model~\eqref{eq:main_sde_intro} for each order
$q\in\{1,2,3\}$, with parameters $H=0.70$, $a=0.40$, $b_0=3\%$,
$b_1=0.5\%$, $\sigma_0=0.08$.
The solution is approximated by the Euler--Maruyama scheme applied to
the Young integral discretisation, using the wavelet-series simulation
of $Z^{(q,H)}$ as the driving process.

With these parameters, $b_0-|b_1|=2.5\%=0.025$ and $\sigma_0=0.08$,
so in fact $m_0=\min(x_0,b_0-|b_1|)\leq0.025<\sigma_0$: the base
volatility is \emph{larger} than $m_0$, not smaller, so the
quantitative lower bound~\eqref{eq:positivity_quant} of
Theorem~\ref{thm:positivity} is \emph{vacuous} for this choice of
parameters (the right-hand side of~\eqref{eq:positivity_quant} is
negative for every $p\geq1$; the theorem covers $\sigma_1=0$, the
case shown here, but gives no information in this particular
regime). Consistent with this, a majority of the simulated paths
shown (5 out of the 9 realisations across the three panels, and all
three for $q=2$) cross into negative territory at least transiently
during $[0,1]$; this is not paradoxical, since no theoretical
guarantee applies in this regime, and it illustrates concretely
why the bound being vacuous is a real limitation rather than a
technicality. As discussed in Remark~\ref{rem:feller_necessity},
no a.s.\ positivity statement is established here.
The dashed line at $b_0=3\%$ illustrates the mean-reversion effect:
paths drift back toward the long-run mean after excursions above or below it.
The non-Gaussian character of the driving noise ($q=2$ and $q=3$)
produces more pronounced asymmetric spikes compared to the Gaussian case
($q=1$), a feature that is particularly relevant for modelling the
discrete, step-like interventions of central banks.

\begin{figure}[H]
\centering
\includegraphics[width=\linewidth]{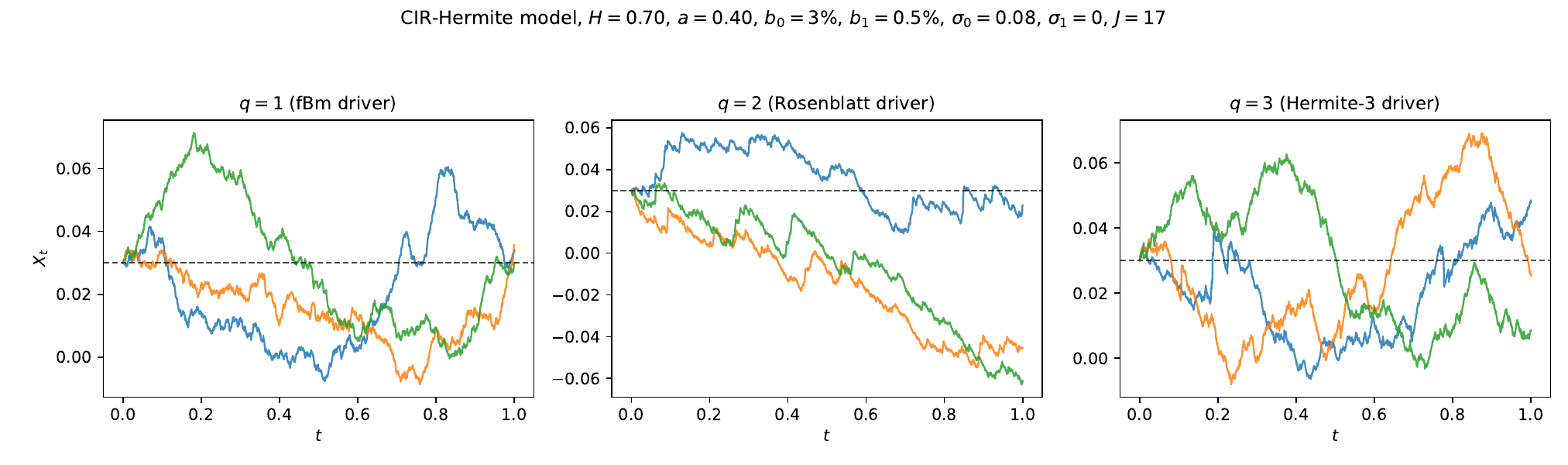}
\caption{Sample paths of the CIR-Hermite model~\eqref{eq:main_sde_intro}
for $q=1$ (fBm driver, left), $q=2$ (Rosenblatt driver, centre),
and $q=3$ (Hermite-3 driver, right),
with $H=0.70$, $a=0.40$, $b_0=3\%$, $b_1=0.5\%$, $\sigma_0=0.08$,
$J=17$.
Each panel shows three independent realisations.
The dashed line represents the long-run target $b_0=3\%$.
Here $\sigma_0=0.08>b_0-|b_1|=0.025$, so the quantitative
bound~\eqref{eq:positivity_quant} of Theorem~\ref{thm:positivity}
is in fact vacuous for these parameters; consistent with this,
5 of the 9 simulated paths shown (all three for $q=2$) dip below
zero at least transiently — positivity is not implied by the
theorem in this regime and is not observed here.
Hermite process simulation: Ayache, Hamonier and
Loosveldt~\cite{AyacheNum2025}.}
\label{fig:cir_hermite}
\end{figure}

\subsection{Marginal distributions and non-Gaussianity}
\label{subsec:dist_figs}

Figure~\ref{fig:distributions} compares the empirical marginal
distributions of the standardised increments of
$Z^{(q,H)}$ for $q\in\{1,2,3\}$, based on 40 independent
realisations per order.
For $q=1$ (fBm), the increments are by construction Gaussian,
and the quantile-quantile comparison with $\mathcal{N}(0,1)$
confirms this.
For $q=2$ (Rosenblatt) and $q=3$ (Hermite-3), the distributions
exhibit substantial positive excess kurtosis, increasing sharply
with $q$; skewness is clearly positive for $q=2$, while for $q=3$
it is comparatively modest and its sign is not robust across
repeated simulations at this sample size.
This non-Gaussianity is a direct consequence of the multiple
Wiener--It\^{o} integral representation: by the hypercontractivity
inequality for Wiener chaos~\cite{nourdin2012normal}, all moments
are finite but the distribution becomes increasingly heavy-tailed
as $q$ increases.
The excess kurtosis values reported (Kurt $\approx 0$ for $q=1$,
strongly positive for $q\geq 2$) are computed on the raw,
untrimmed increments: for these Wiener-chaos variables the excess
kurtosis is carried almost entirely by the extreme tail, so
winsorising or trimming the sample (e.g.\ at the 2.5--97.5\% range)
removes exactly the feature being measured and can even produce
the wrong sign. This suggests a kurtosis-based statistic could
serve as the basis for an estimator of the Hermite order $q$ from
observed data; we leave a rigorous treatment of this estimation
problem to future work.

\begin{figure}[H]
\centering
\includegraphics[width=\linewidth]{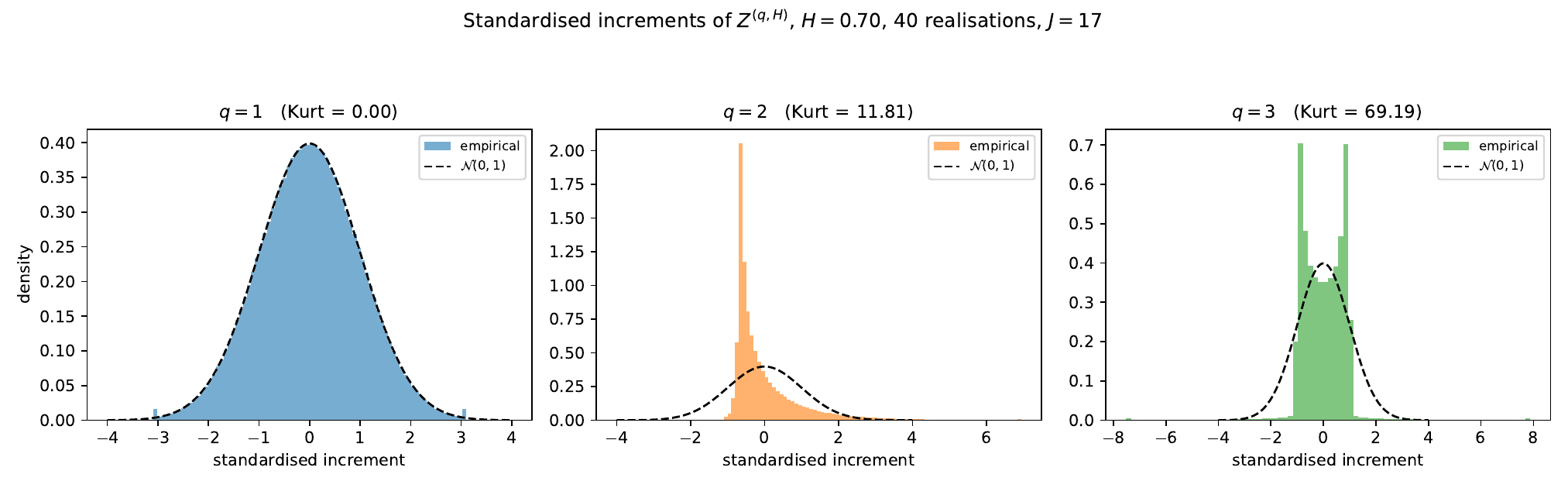}
\caption{Empirical marginal distributions of standardised increments
of $Z^{(q,H)}$ for $q=1$ (left), $q=2$ (centre), and $q=3$ (right),
with $H=0.70$, based on 40 independent realisations per order.
The dashed curve is the $\mathcal{N}(0,1)$ density.
Excess kurtosis (computed on the raw, untrimmed increments; see
main text) rises sharply with $q$: $\approx0.00$ for $q=1$,
$\approx11.8$ for $q=2$, and $\approx69.2$ for $q=3$, as predicted
by the theory of Wiener chaos~\cite{tudorbook,nourdin2012normal}.}
\label{fig:distributions}
\end{figure}

% ================================================================
%  SECTION 5 — CONCLUSION AND PERSPECTIVES
% ================================================================
\section{Conclusion and Open Problems}
\label{sec:conclusion}

\subsection{Summary}

This paper has established the rigorous mathematical foundations of the
CIR-Hermite model~\eqref{eq:main_sde_intro}.
Table~\ref{tab:results} provides a concise overview of the four main results,
their proof methods, and their nature (genuinely new vs.\ verification of
existing frameworks).

\begin{table}[H]
\centering
\caption{Summary of theoretical results.
``New'' denotes a result that is new even for special cases ($q=2$ or $q=3$);
``Verification'' denotes that the main theorem is taken from an existing
general framework, with our contribution being the verification of its
hypotheses in the CIR-Hermite setting.}
\label{tab:results}
\renewcommand{\arraystretch}{1.35}
\begin{tabular}{@{}p{4.5cm}p{3cm}p{4.5cm}p{2cm}@{}}
\toprule
\textbf{Result} & \textbf{Reference} & \textbf{Key tool} & \textbf{Nature} \\
\midrule
Existence \& uniqueness
  & Theorem~\ref{thm:existence}
  & Young--Stieltjes integration; Nualart--R\u{a}\c{s}canu theory
  & New \\
Positivity (quantitative)
  & Theorem~\ref{thm:positivity}
  & Lower barrier process; hypercontractivity; Markov's inequality
  & \textbf{New} \\
Malliavin differentiability
  & Theorem~\ref{thm:malliavin}
  & Verify (RAC) \& (SGD) of \cite{loosveldt2025}
  & Verification \\
Absolute continuity of law
  & Theorem~\ref{thm:density}
  & Bouleau--Hirsch criterion; non-degeneracy
  & Verification \\
\bottomrule
\end{tabular}
\end{table}

The positivity result (Theorem~\ref{thm:positivity}) is arguably
the most novel contribution, though it is weaker than its classical
counterpart.
In the classical Brownian setting, positivity of CIR follows from
Feller's classical boundary classification for one-dimensional
diffusions driven by semimartingales.
For Hermite drivers with $q\geq 2$, this argument breaks down completely,
since the generator is not a second-order differential operator, the
process is not Markovian, and the diffusion coefficient $\tilde\sigma$
does not vanish at the boundary.
We do not attempt to recover an almost-sure statement; instead, our
proof strategy --- constructing an explicit lower barrier process
satisfying a linear Young SDE and controlling its pathwise excursions
via the hypercontractivity of Wiener chaoses combined with Markov's
inequality --- yields an explicit probability bound that appears to be
new in this setting.

\subsection{Open problems}
\label{subsec:open}

The theoretical analysis raises several natural questions that remain open.

\begin{enumerate}[label=\textbf{(\arabic*)}, itemsep=6pt]

\item \textit{Almost-sure positivity.}
Theorem~\ref{thm:positivity} gives a quantitative lower bound on
$\PP(X_t>0\ \forall t\in[0,T])$, not an almost-sure statement.
Two distinct benchmarks are worth separating here. In the strictly
classical Brownian case ($H=1/2$), the sharp Feller condition
$2ab\geq\sigma^2$ is both sufficient and necessary for a.s.\
positivity, via It\^{o}'s formula and semimartingale comparison
theorems. In the Gaussian long-range-dependent case covered by our
framework ($q=1$, $H\in(1/2,1)$), no such threshold is needed at
all: Mishura and Yurchenko-Tytarenko~\cite{mishura2017} show, by a
pathwise contradiction argument that uses only the H\"{o}lder
continuity of $B^H$, that the fBm-driven CIR process is a.s.\
strictly positive for \emph{every} $\sigma>0$ as soon as $k>0$.
For $q\geq 2$, whether an analogous a.s.\ statement holds --- with
or without a parameter condition on $(a,b_0,b_1,\sigma_0)$ ---
remains entirely open; the obstruction is not the absence of
semimartingale tools alone (already absent at $q=1$, $H>1/2$), but
that the pathwise argument of~\cite{mishura2017}, specific to the
first Wiener chaos, does not evidently extend to higher-order
Hermite processes.

Whether a Feller-type boundary criterion exists for Hermite-driven
CIR dynamics remains an open problem. In particular, it is unknown
whether the boundary $\{0\}$ is inaccessible under suitable
parameter restrictions, analogous to the unconditional statement
available at $q=1$, or whether the rough/non-semimartingale nature
of the Hermite driver fundamentally changes the boundary behaviour
for $q\geq2$.

Extending Theorem~\ref{thm:positivity} to
$\sigma_1>0$ is a related, more immediate open question
(Remark~\ref{rem:positivity_sigma1}).

\item \textit{Boundary behaviour at $x=0$.}
When $m_0/\sigma_0$ is not large (so the bound of
Theorem~\ref{thm:positivity} is uninformative), what is the actual
behaviour of the solution near the boundary $\{0\}$?
In the Brownian setting, Feller's theory classifies boundaries as
entrance, exit, regular, or natural.
Developing an analogous theory for non-semimartingale dynamics is
a substantial open problem.

\item \textit{Higher-order Malliavin regularity.}
Our result establishes $X_t\in\D^{1,\infty}$.
Whether $X_t\in\D^{k,\infty}$ for $k\geq 2$ can be obtained by iterating
the argument of Theorem~\ref{thm:malliavin} (which requires verifying
that $\phi_\eps$ and the drift are sufficiently smooth in the Malliavin
sense) is an interesting open question.
Higher-order Malliavin differentiability would allow, for example,
the derivation of density regularity results (existence of a smooth density
for the law of $X_t$).

\item \textit{Optimality of the regularisation.}
We introduce $\phi_\eps$ as a smooth approximation of $\sqrt{x}$ to
bring the model within the scope of the Lipschitz-based theory.
A natural question is whether the results extend to the limit $\eps\to 0$
(i.e., to the genuine square-root coefficient $\sigma_1\sqrt{x}$).
This would require either extending the Nualart--R\u{a}\c{s}canu theory
to non-Lipschitz coefficients in the Young setting, or developing
an independent Malliavin calculus for this class of SDEs.

\item \textit{Multidimensional CIR-Hermite systems.}
A natural extension is the system
\[
  dX_t^i = a_i\bigl(b_i(t) - X_t^i\bigr)\,dt
  + \tilde\sigma_i(X_t)\,dZ_t^{(q_i,H_i)}, \quad i=1,\ldots,d,
\]
where the driving Hermite processes $(Z^{(q_1,H_1)},\ldots,Z^{(q_d,H_d)})$
may be correlated.
Such a model would capture the joint dynamics of the yield curve.
Well-posedness and positivity in the multidimensional setting require
new arguments, since the scalar barrier construction of
Theorem~\ref{thm:positivity} does not generalise directly.

\end{enumerate}

\subsection{Perspective: statistical inference}
\label{subsec:companion_full}

The theoretical foundations established here — in particular the
absolute continuity of the marginal law (Theorem~\ref{thm:density}) and
the Malliavin regularity of $X_t$ (Theorem~\ref{thm:malliavin}) — are
the natural starting point for statistical inference in the CIR-Hermite
model: estimation of $\theta=(a,b_0,b_1,T_{\mathrm{per}},\sigma_0,\sigma_1,H,q)$,
asymptotic distribution theory, and empirical application to interest
rate data. This is the object of work currently in preparation,
together with Monte Carlo validation; we hope to report on it in a
forthcoming paper.

\subsection*{Acknowledgements}

The author thanks \textbf{Laurent Loosveldt}, \textbf{Yassine Nachit},
and \textbf{Ivan Nourdin} for sharing their preprint~\cite{loosveldt2025}
and for valuable discussions.
He thanks \textbf{Antoine Ayache}, \textbf{Julien Hamonier}, and
\textbf{Laurent Loosveldt} for generously providing their simulation
code~\cite{AyacheNum2025}.
This research was conducted at the Toulouse School of Economics (TSE),
Université Toulouse Capitole.
The author acknowledges the support of the French National Research
Agency (ANR) under the grant ANR-17-EURE-0010 (Investissements
d'Avenir program).

% ================================================================
%  BIBLIOGRAPHY
% ================================================================
\bibliographystyle{abbrvnat}

\end{document}